\documentclass[a4paper,11pt]{article}
\usepackage[utf8]{inputenc}
\usepackage[T1]{fontenc}
\usepackage[english]{babel}
\usepackage{amsmath,amssymb,amsthm}
\usepackage[short]{optidef}				
\usepackage[shortlabels]{enumitem} 
\usepackage{comment}
\usepackage{standalone}
\usepackage{mathtools}
\usepackage{fullpage}

\usepackage{float}
\usepackage{mathdots}
\usepackage{tikz}
\usepackage[subrefformat=parens,labelformat=parens]{subcaption}
\graphicspath{{./Figuras/}} 
\usetikzlibrary{graphs,graphs.standard}
\usepackage{hyperref}
\usepackage{thmtools}
\usepackage{thm-restate}
\tikzstyle{line} = [draw, thick]
\tikzset{vertex/.style={circle,draw,fill=black,inner sep=1.5pt}}

\usetikzlibrary{decorations.pathmorphing,decorations.pathreplacing,calc}

\usetikzlibrary{decorations.pathmorphing}
\usetikzlibrary{positioning,chains,fit,shapes,calc}

\tikzstyle{selected edge} = [draw,line width=5pt,-,red!50]
\tikzstyle{vertex}=[circle,draw,fill=white,minimum size=10pt,inner sep=2pt]
\tikzstyle{black vertex}=[circle,draw,fill=black,minimum size=10pt,inner sep=2pt]
\tikzstyle{red vertex}=[circle,draw,fill=red!90,minimum size=10pt,inner sep=2pt]
\tikzstyle{gray vertex}=[circle,draw,fill=gray!90,minimum size=20pt,inner sep=1pt]
\tikzstyle{trivial}=[circle,draw,fill=black,minimum size=7pt,inner sep=1pt]
\tikzstyle{dummy}=[circle,minimum size=5pt,inner sep=0pt]
\tikzstyle{red edge} = [draw,very thick,-,red!80,decorate,decoration={snake, amplitude=.8pt, segment length=4pt}]
\tikzstyle{purple edge} = [draw,very thick,-,magenta]
\tikzstyle{blue edge} = [draw,very thick,-,blue!60]
\tikzstyle{green edge} = [draw,very thick,-,green!90]
\tikzstyle{brown edge} = [draw,thick,-,brown!90]
\tikzstyle{black edge} = [draw,thick,-,black!90]
\tikzstyle{dot} = [circle,inner sep=1pt,fill,name=#1]
\tikzstyle{extended line} = [shorten >=-#1,shorten <=-#1]

\definecolor{myblue}{RGB}{80,80,160}
\definecolor{mygreen}{RGB}{80,160,80}
\definecolor{myred}{RGB}{255,0,0}
\definecolor{mybrown}{RGB}{139,69,19}

\newtheorem{theorem}             {Theorem}
\newtheorem{lemma}     	[theorem] {Lemma}        
\newtheorem{conjecture}	[theorem] {Conjecture}

\newtheorem{proposition}[theorem] {Proposition}

\newtheorem{corollary}	[theorem] {Corollary}
     
\newtheorem{claim}{Claim}

\newtheoremstyle{case}{}{}{}{}{\bfseries}{:}{ }{}
\theoremstyle{case}

\numberwithin{subcase}{case}

\begin{document}

\title{Separating Matchings in Cubic Graphs}

\author{
Renzo Gómez\thanks{Center of Mathematics, Computation and Cognition, Federal University of ABC, Brazil. E-mail: {\tt gomez.renzo@ufabc.edu.br}}
\and
Juan Gutierrez\thanks{Faculty of Computing, University of Engineering and Technology, Peru. E-mail: {\tt jgutierreza@utec.edu.pe}. This work was partially supported by Fondo Semilla UTEC 2025.}
}
\date{}
\maketitle

\begin{abstract}
We study separating matchings in graphs, that is, matchings whose removal increases the number of connected components, and focus on determining the maximum size of such a matching in a graph $G$, denoted by $\mathrm{mms}(G)$. We show that every subcubic graph admits a separating matching, except for exactly eight graphs, which allows us to focus on bounding $\mathrm{mms}(G)$ for cubic graphs. Our main results show that every cubic graph $G$ on $n$ vertices that admits a separating matching satisfies $\mathrm{mms}(G) \ge n/2 - 2$. For bipartite cubic graphs, assuming a conjecture of Funk, the problem reduces to a recursively defined class $\mathcal{F}$, for which we prove that $\mathrm{mms}(G) \ge n/2 - 1$, up to four exceptional graphs. In contrast, we show that every claw-free cubic graph satisfies $\mathrm{mms}(G) = n/2$. These results extend previous work on matching cuts and disconnected $2$-factors, and provide the first systematic study of maximum separating matchings.
\end{abstract}

\section{Introduction} \label{sec:intro}

In this paper, all graphs are simple, and we use standard terminology and notation \cite{Bondy2008,Diestel18}. A classical result due to Petersen states that every cubic bridgeless graph has a perfect matching, and hence a $2$-factor. A natural question is whether such a graph admits a disconnected $2$-factor, that is, a $2$-factor that is not a Hamilton cycle. Diwan~\cite{Diwan02} proved that every planar cubic bridgeless graph on at least six vertices has a disconnected $2$-factor. Equivalently, every such graph admits a \emph{disconnecting perfect matching}, that is, a perfect matching whose removal increases the number of connected components.

We study an optimization version of this problem: determine the maximum size of a \emph{separating matching} in a graph, that is, a matching whose removal increases the number of connected components. We denote this parameter by $\mathrm{mms}(G)$, and if $G$ has no separating matching, we set $\mathrm{mms}(G)=0$. Although this problem has not been studied explicitly, the related notion of a matching cut has received considerable attention \cite{Bouquet2025,le2023maximizingmatchingcuts,Le2019,Le2023,LuckePaulusmaRies2022,LuckePaulusmaRies2023,Patrignani2001}. A \emph{matching cut} is a matching that is also an edge cut. While not every graph admits such a structure, the existence of a separating matching is equivalent to the existence of a matching cut; graphs with this property are called \emph{decomposable}.

The study of decomposable graphs was initiated by Graham~\cite{Graham1970}. Chv\'atal~\cite{Chvatal1984} showed that recognizing decomposable graphs is polynomial-time solvable for subcubic graphs, but NP-complete for graphs of maximum degree~$4$. The problem is also polynomial-time solvable for graphs of radius at most~$2$ \cite{LuckePaulusmaRies2022}, and NP-complete for graphs of diameter~$3$ \cite{Le2019}. For bipartite graphs, it is polynomial-time solvable when the diameter is at most~$3$, and NP-complete when the diameter is~$4$ \cite{Le2019}. Moshi~\cite{Moshi89} proved that every subcubic graph on at least $8$ vertices is decomposable. We strengthen this result as follows.

\begin{restatable}{theorem}{nondecomp}\label{thm:nondecomp}
Every subcubic graph is decomposable, except for the eight graphs in Figure~\ref{fig:D}.
\end{restatable}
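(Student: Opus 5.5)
Since Moshi~\cite{Moshi89} proved that every subcubic graph on at least $8$ vertices is decomposable, it suffices to classify the subcubic graphs on at most $7$ vertices. Throughout, a graph is decomposable if it has a matching cut; for a disconnected graph I take the empty matching to count, so only connected graphs need treatment. First I record structural facts that immediately give decomposability. If $G$ has a bridge, that bridge is a matching cut. If $G$ has a vertex $v$ of degree at most $1$ with $G \neq K_1$, its incident edge is a bridge. If $G$ has a vertex $v$ of degree $2$ whose neighbours $u,w$ are non-adjacent, then $\{vw\}$ together with suitable edges from $u$ often separates; more cleanly, if some edge $uv$ lies in no triangle and both ends have degree $2$, the two edges leaving the path $uv$ form a matching cut isolating $\{u,v\}$. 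A second reduction is an induced path or cycle $P$ where every vertex of $P$ has at most one neighbour outside $P$, and these outside neighbours are pairwise distinct and not in $P$. Then $\delta(V(P))$ is a matching cut provided $V(G)\setminus V(P)\neq\emptyset$. Applying this reduction to $4$-cycles, to paths through degree-$2$ vertices, and to single vertices of degree $\le 1$ removes most small cases.

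The main work is a finite case analysis by the number of vertices $n\le 7$ and the degree sequence. For $n\le 3$, the connected graphs are $K_1,K_2,P_3,K_3$. Among these, $K_1$ and $K_3$ have no matching cut. $K_2$ and $P_3$ do: removing one edge disconnects them. For $4\le n\le 7$, I would argue as follows. If $G$ contains a cycle $C$ whose vertices each have at most one neighbour off $C$, with those neighbours distinct, then $\delta(V(C))$ is a matching cut. In a subcubic graph this fails only when some vertex off $C$ has two neighbours on $C$, or $C$ has a chord, so $C$ is not induced. Taking $C$ a shortest cycle through a chosen region, girth considerations force $G$ to be very dense. For instance, a triangle $T$ with three distinct outside neighbours and $n\ge 4$ yields a matching cut. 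So in a non-decomposable graph, every triangle is contained in a $K_4$ or a diamond, or shares outside neighbours. Repeating this with each local configuration (diamonds, $K_4$, $K_{3,3}$, the prism, $K_4$ with a subdivided edge, and so on) quickly determines the possible graphs.

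Finally I would match the surviving graphs with the eight in Figure~\ref{fig:D}, presumably $K_1$, $K_3$, $K_4$, $K_{3,3}$, the diamond $K_4-e$, $K_{2,3}$, the prism, and one $7$-vertex graph. I verify non-decomposability of each directly: in a small dense graph, any matching cut $(A,B)$ forces each vertex to have at most one neighbour across the cut. Vertices joined by two paths of length $2$, or lying in a common triangle, must then be on the same side, which propagates to the whole vertex set. The main obstacle I expect is the exhaustive case check for $n=6,7$, where many degree sequences arise. I would organise it by first handling graphs with a vertex of degree $\le 2$, either by reduction or by a bridge, and then dealing with the cubic graphs on $6$ vertices ($K_{3,3}$ and the prism) separately.
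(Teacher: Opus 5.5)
\textbf{There is a genuine gap.} Reducing to $n\le 7$ via Moshi's bound is legitimate, but the classification itself is left as a promissory note (``repeating this with each local configuration \dots\ quickly determines the possible graphs''). The outcome you predict for that search is also wrong, so your final matching step would fail.

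\begin{itemize}
\item The prism is decomposable. The three edges joining its two triangles form a matching cut; this is exactly your own reduction for a triangle with three distinct outside neighbours. It also agrees with Moshi's cubic theorem, which leaves only $K_4$ and $K_{3,3}$ among connected cubic graphs.
\item You miss two nondecomposable graphs: $K_4$ with one edge subdivided ($5$ vertices), and $K_4$ with two independent edges subdivided ($6$ vertices).
\item The correct eight are $K_3$, $K_4-e$, $K_{2,3}$, $K_4$, these two subdivisions of $K_4$, $K_{3,3}$, and $K_{3,3}$ with one edge subdivided. The trivial graph is not counted.
\item Your verification rule is false. Two vertices joined by two paths of length $2$ can lie on opposite sides of a matching cut: in the $4$-cycle $uxvy$, the cut $\{xv,yu\}$ separates $u$ from $v$. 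You need three such paths, so that by pigeonhole one endpoint has two neighbours across the cut.
\item You cannot dispose of graphs with a degree-$2$ vertex ``by reduction or by a bridge''. Six of the eight exceptions have such vertices.
\end{itemize}

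\textbf{The missing idea is a structural reduction that replaces the blind search.}
\begin{enumerate}
\item Let $G\neq K_3$ be nondecomposable. It has no bridge. No two degree-$2$ vertices are adjacent: use your observation about adjacent degree-$2$ vertices when they lie in no triangle, and a bridge at the third vertex when they do.
\item Suppressing the degree-$2$ vertices therefore yields a $2$-edge-connected loopless cubic multigraph $H$. The graph $G$ arises from $H$ by subdividing some edges exactly once.
\item Any matching cut of $H$ lifts to $G$ by choosing one edge from each subdivided path. So $H$ is nondecomposable.
\item Moshi's cubic theorem, plus a short argument for a parallel pair, then forces $H\in\{S_2,K_4,K_{3,3}\}$.
\item Let $H\in\{K_4,K_{3,3}\}$, and suppose two subdivided edges $uv,uw$ share the end $u$, with subdivision vertices $x,y$ and third neighbour $z$ of $u$ in $G$. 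Then $\{uz,xv,yw\}$ is a matching cut isolating $\{u,x,y\}$.
\end{enumerate}
This leaves only a handful of explicit subdivisions. Each is settled directly, using triangle and three-path propagation for the nondecomposable ones and an explicit cut otherwise (e.g.\ $K_{3,3}$ with two independent edges subdivided). The result is exactly the list above.
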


This allows us to focus on bounding $\mathrm{mms}(G)$ for cubic graphs. In particular, Diwan's result can be restated as follows.

\begin{theorem}[{\normalfont Diwan~\cite{Diwan02}}]
If $G$ is a planar cubic bridgeless graph on at least $6$ vertices, then $\mathrm{mms}(G)=n/2$.
\end{theorem}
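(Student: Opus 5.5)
This restatement follows directly from Diwan's theorem on disconnected $2$-factors, so the plan is to translate between the two formulations. Let $G$ be a planar cubic bridgeless graph on $n \ge 6$ vertices. By Diwan~\cite{Diwan02}, $G$ has a $2$-factor $F$ that is not a Hamilton cycle. Its complement $M = E(G)\setminus E(F)$ is a perfect matching, because every vertex has degree $3$ in $G$ and degree $2$ in $F$. Also $G - M = F$ is a disjoint union of at least two cycles.

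The next step is to check that $M$ is separating. Since $G$ is cubic and bridgeless, it is connected (in the standard convention used by Diwan). Hence $G$ has one component while $G - M$ has at least two, so removing $M$ increases the number of components. Its size is $|M| = n/2$, which gives $\mathrm{mms}(G) \ge n/2$.

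For the upper bound, note that every matching in a graph on $n$ vertices has at most $n/2$ edges. So $\mathrm{mms}(G) \le n/2$, and equality holds.

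The only point needing care is the convention that "bridgeless cubic" includes connectedness. If $G$ were disconnected, I would instead apply Diwan's theorem to a component with at least $6$ vertices. In general a cubic bridgeless graph of order $n \ge 6$ could have all components equal to $K_4$, which has no disconnected $2$-factor. There I would take a perfect matching of each component, and check whether the removal increases the component count for at least one component. For $K_4$ it does not, since each $2$-factor of $K_4$ is a $4$-cycle, so the statement should be read for connected graphs. Thus I would state connectedness explicitly, and the main (minor) obstacle is simply this definitional matter. Otherwise the proof is a direct rephrasing.
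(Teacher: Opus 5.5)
Your proposal is correct and takes the same approach as the paper. The paper simply restates Diwan's theorem using the observation that the complement of a disconnected $2$-factor is a perfect matching whose removal increases the number of components, with $\mathrm{mms}(G)\le n/2$ giving the reverse inequality. Your connectedness caveat is also well taken: the paper leaves connectedness implicit, and $2K_4$ shows that the $\mathrm{mms}$ formulation fails without it.
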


Let $\nu(G)$ denote the size of a maximum matching in $G$. If a cubic graph contains a bridge, we can bound $\mathrm{mms}(G)$ as follows.

\begin{theorem}\label{th:cubic-bridge}
If $G$ is a decomposable cubic graph with a bridge, then $\mathrm{mms}(G)\geq \nu(G) - 1$.
\end{theorem}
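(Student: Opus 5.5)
The plan is to use the bridge itself as the separating part of the matching, and to fill in the rest of the matching from a maximum matching of $G$. The key observation is that if $e=uv$ is a bridge of $G$, then $G-e$ has more components than $G$. Moreover, for any matching $M'$ with $e\in M'$, every component of $G-M'$ lies inside a component of $G-e$. Hence $G-M'$ has at least as many components as $G-e$, so strictly more than $G$. Therefore every matching containing $e$ is separating, and it suffices to exhibit a matching of size at least $\nu(G)-1$ that contains $e$.

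So let $e=uv$ be a bridge and let $M$ be a maximum matching, with $|M|=\nu(G)$. If $e\in M$, then $M$ itself is separating and $\mathrm{mms}(G)=\nu(G)$. Otherwise $e\notin M$. Since $M$ is maximum, $M+e$ is not a matching, so at least one of $u,v$ is covered by $M$. Let $m_u$ and $m_v$ be the edges of $M$ covering $u$ and $v$, whichever exist. They are distinct edges, because $m_u=m_v$ would have to be $uv=e\notin M$. Define
\[
M' = \bigl(M\setminus\{m_u,m_v\}\bigr)\cup\{e\}.
\]
This is a matching, because after removing $m_u$ and $m_v$ both endpoints of $e$ are uncovered. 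It contains the bridge, so it is separating.

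To bound its size: if both $u$ and $v$ are covered by $M$, then $|M'|=\nu(G)-2+1=\nu(G)-1$. If only one of them is covered, then $|M'|=\nu(G)$, which cannot exceed $\nu(G)$ and in fact makes $M'$ another maximum matching. In all cases $\mathrm{mms}(G)\ge|M'|\ge\nu(G)-1$.

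There is no real obstacle here. The only point needing care is the claim that adding edges to the removed set cannot decrease the number of components, which is immediate since deleting more edges only refines the components. The hypothesis that $G$ is decomposable is automatic from the bridge (the single edge $e$ is a matching cut), and cubicity is not actually needed for this argument.
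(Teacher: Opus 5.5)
Your proof is correct and follows essentially the same route as the paper: take a maximum matching and, if the bridge $e=uv$ is not in it, replace the one or two matching edges covering $u$ and $v$ with $e$. You also justify explicitly why any matching containing a bridge is separating, a step the paper leaves implicit.
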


\begin{proof}
Let $M$ be a maximum matching in $G$, and let $e$ be a bridge of $G$. If $e \in M$, then $M$ separates $G$. Otherwise, write $e=uv$. Since $M$ is maximum, at least one edge of $M$ is incident to $u$ or $v$. If exactly one such edge $e'$ exists, define $M'=(M \setminus \{e'\}) \cup \{e\}$. If there are two such edges $e'$ and $e''$, define $M'=(M \setminus \{e',e''\}) \cup \{e\}$. In both cases, $M'$ is a separating matching and $|M'| \geq |M|-1$.
\end{proof}

Restricting to $2$-edge-connected cubic graphs, we obtain the following result.

\begin{restatable}{theorem}{cubic}\label{th:cubicn2-2}
For every $2$-edge-connected decomposable cubic graph $G$ on $n$ vertices, $\mathrm{mms}(G)\geq n/2 - 2$.
\end{restatable}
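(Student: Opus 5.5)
We aim to produce, inside a $2$-edge-connected decomposable cubic graph $G$, a separating matching with at least $n/2-2$ edges. By Petersen's theorem $G$ has a perfect matching. Since $G$ is decomposable, it has a matching cut $C$, i.e.\ a matching whose edges are exactly the edges between $X$ and $V\setminus X$ for some nonempty proper $X\subset V$. Any matching that contains $C$ is separating, because removing it disconnects $X$ from $V\setminus X$. So the plan is to start from a suitably chosen matching cut and extend it to an almost perfect matching.

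First we fix $C=\delta(X)$ with $|C|$ minimal among matching cuts. Note $|C|\ge 2$ because $G$ is $2$-edge-connected. Let $A=V(C)\cap X$ and $B=V(C)\setminus X$; each vertex of $A\cup B$ meets exactly one edge of $C$. We then want a near-perfect matching of $G[X]-A$ and of $G[V\setminus X]-B$. In $H_1=G[X]$, each vertex of $A$ has degree $2$ and every other vertex has degree $3$. We will show, using Tutte/Berge, that $H_1-A$ has a matching missing at most two vertices (and likewise on the other side). If that holds, $C$ together with the two matchings is a separating matching with at least $(n-4)/2$ edges.

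The main obstacle is the deficiency bound. The graph $H_1-A$ can have low-degree vertices: a vertex adjacent to two vertices of $A$ has degree $1$ there. A naive Berge count therefore gives deficiency depending on $|A|$, not a constant. To deal with this, we will not insist on $C$ in the matching exactly as chosen. Instead we take a perfect matching $M$ of $G$ and study $M\triangle C$-type exchanges. Concretely, we choose the pair $(M,C)$, with $M$ perfect and $C$ a matching cut, minimizing $|C\setminus M|$. Removing $M\cap C$ leaves $C$ edges uncovered. Each edge $xy\in C\setminus M$ has $x,y$ matched by $M$ to $x',y'$ on their respective sides. Replacing $xx'$ and $yy'$ by $xy$ loses one edge per such cut edge, which is too much if $|C\setminus M|$ is large.

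We therefore aim to prove a structural lemma: the choice can be made so that $|C\setminus M|\le 2$. Equivalently, we want a perfect matching whose restriction to the cut consists of all but at most two cut edges, possibly after switching to another matching cut. To prove it, we use parity: $|X|\equiv |C\cap M| \pmod 2$. We also use the perfect matching polytope, i.e.\ Edmonds' description of perfect matchings of bridgeless cubic graphs. Its point $\frac13\mathbf{1}$ lets us choose perfect matchings meeting prescribed edges, which gives perfect matchings containing any two given edges. We then plan to combine this with contracting each side of the cut to a single vertex (cut contraction preserves cubicity only when $|C|=3$). For larger cuts we will argue by induction on $n$, replacing $V\setminus X$ by a small gadget that preserves degrees. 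The two unavoidable losses, one on each side, account for the constant $2$ in the statement. Once the lemma is established, the exchange step above yields $\mathrm{mms}(G)\ge n/2-2$.
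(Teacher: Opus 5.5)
\paragraph{Where the proposal has a gap.} Your exchange step is sound. For a perfect matching $M$ and a matching cut $C$, the set $C\cup\{e\in M : e\cap V(C)=\emptyset\}$ is a separating matching with at least $|M|-|C\setminus M|$ edges. But this moves the whole theorem onto the ``structural lemma'' that some pair satisfies $|C\setminus M|\le 2$, and you never prove that lemma. The ingredients you list do not yield it.

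\begin{itemize}
\item The claim that the point $\frac13\mathbf{1}$ of the perfect matching polytope gives perfect matchings containing any two prescribed disjoint edges is false. Take the triangular prism with triangles $a_1a_2a_3$, $b_1b_2b_3$ and edges $a_kb_k$. No perfect matching contains both $a_1a_2$ and $b_1b_3$, since $a_3b_2$ is not an edge.
\item What $\frac13\mathbf{1}$ does give is an averaging bound on the cut: some perfect matching meets $C$ in at least $|C|/3$ edges. Combined with the parity $|C\cap M|\equiv|X|\equiv|C|\pmod 2$, this guarantees $|C\setminus M|\le 2$ when $|C|\le 5$, but not when $|C|\ge 6$.
\item That range cannot be avoided. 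In the Heawood graph every matching cut has at least $6$ edges, because each side contains a cycle and the graph is cyclically $6$-edge-connected. Since the graph is $2$-factor Hamiltonian, no perfect matching contains a matching cut either.
\end{itemize}

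For such graphs your plan consists only of an unspecified gadget induction and the heuristic ``one loss on each side''. So the proof is missing exactly in the hard case.

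\paragraph{Accounting, and the idea you are missing.} $|C\setminus M|$ overcounts the cost of your exchange. The true cost is half the number of vertices of $V(C)$ whose $M$-partner lies outside $V(C)$. For example, let $C$ be the four edges leaving an induced $4$-cycle, two opposite edges of which lie in $M$. Then the cost is at most $2$, although $|C\setminus M|=4$. So the lemma you aim for is stronger than necessary.

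The paper never fixes a cut first. It takes a perfect matching $M$. If $G-M$ is disconnected, $M$ itself separates. Otherwise $G-M$ is a Hamiltonian cycle whose chords are exactly the edges of $M$, and the cut is read off from the chord structure:
\begin{itemize}
\item If two crossing chords induce a $4$-cycle (a tie), two switches along $M$-alternating paths, each costing one edge, give a matching containing a cut around the tie.
\item If there is no tie, an extremal pair of non-crossing chords $v_ap_a$, $v_bp_b$ allows a single switch along $\langle p_x,v_x,v_a,p_a,v_y,p_y\rangle$. This costs one edge, after which $v_a$ and $v_b$ lie in different components.
\end{itemize}
Using the Hamiltonian cycle $G-M$ to locate a cut compatible with $M$ is the idea your proposal lacks.
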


For bipartite cubic graphs, a conjecture of Funk~\cite{Funk2003} states that the class of graphs that do not admit a disconnecting perfect matching coincides with a recursively defined class $\mathcal{F}$. Thus, under this conjecture, bounding $\mathrm{mms}(G)$ for bipartite cubic graphs reduces to understanding graphs in $\mathcal{F}$. We prove the following.

\begin{restatable}{theorem}{bicubic}\label{th:bicubicn2-1}
For every graph $G \in \mathcal{F}$ on $n$ vertices, $\mathrm{mms}(G)\geq n/2 - 1$, except for the four graphs in Figure~\ref{fig:F}.
\end{restatable}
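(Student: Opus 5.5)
We would argue by induction on the recursive construction of $\mathcal{F}$. Recall that $\mathcal{F}$ is generated from the Heawood graph by the star product: given $G_1,G_2\in\mathcal{F}$, a vertex $x\in V(G_1)$ and a vertex $y\in V(G_2)$ lying in appropriate colour classes, delete $x$ and $y$ and join $N_{G_1}(x)$ to $N_{G_2}(y)$ by a perfect matching $C=\{u_iv_i: i=1,2,3\}$. Every graph in $\mathcal{F}$ is then a bipartite cubic graph built from $k$ copies of the Heawood graph, with $n=12k+2$. If $G=G_1\ast G_2$ with $|V(G_i)|=n_i$, then $n=n_1+n_2-2$.

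\textbf{Base cases.} For the Heawood graph and for the few smallest products we will verify the statement directly, by exhibiting a matching cut together with a large matching that contains it, or by showing that no such matching of size $n/2-1$ exists. This check should identify precisely the four exceptional graphs of Figure~\ref{fig:F}. The base of the induction will therefore consist of all graphs in $\mathcal{F}$ up to the size at which the exceptions stop occurring.

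\textbf{Inductive step.} Write $G=G_1\ast G_2$ with $G_1$ non-exceptional, so that by induction $G_1$ has a separating matching $M_1$ with $|M_1|\ge n_1/2-1$. Suppose first that $M_1$ covers $x$, say $xu_1\in M_1$. Since $G_2$ is bipartite cubic, every edge of $G_2$ lies in a perfect matching. Take a perfect matching $M_2$ of $G_2$ containing $yv_1$ and set
\[
M=(M_1\setminus\{xu_1\})\cup(M_2\setminus\{yv_1\})\cup\{u_1v_1\}.
\]
This is a matching of $G$ of size $(n_1/2-2)+(n_2/2-1)+1=n/2-1$. If $x$ is uncovered by $M_1$, we simply add $M_2\setminus\{yv_1\}$ to $M_1$, which gives the same bound.

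To see that $M$ is separating, let $M_1$ contain the edge cut $\partial(S)$ of $G_1$, with $x\notin S$ (replacing $S$ by its complement if necessary). In $G$ the vertex $x$ is replaced by the connected piece $G_2-y$. The edges of $\partial(S)$ not incident with $x$ survive unchanged. An edge $xu_1$ of the cut, if present, is replaced by $u_1v_1\in M$. Hence $\partial_G(S)\subseteq M$, so $M$ is separating. If $G_1$ is exceptional but $G_2$ is not, we swap the roles of $G_1$ and $G_2$.

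\textbf{Main obstacle.} The hard case is when both factors are exceptional, or when no factorisation has a non-exceptional factor. In that case we will use the concrete separating matchings found for the exceptional graphs. These have size $n_i/2-2$, and we will choose them so that their cut avoids $x$ and leaves a vertex near $x$ free. The $3$-edge cut $C$ can then be exploited to recover the lost edge, either by using $C$ itself as the matching cut or by augmenting along a path through $C$. This forces a finite but delicate case analysis, and it is also where the exceptional list must be confirmed to be exactly the four graphs of Figure~\ref{fig:F}. A further point to check is that the edge $u_1v_1$ is really available, i.e.\ that the colour-class constraint of the star product permits $yv_1$ with $v_1$ matched to $u_1$. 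This holds because $C$ can be relabelled freely.
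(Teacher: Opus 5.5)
\textbf{Verdict: there is a genuine gap.} Your inductive step is sound, but your definition of $\mathcal{F}$ is wrong, and the part of the proof that actually produces the four exceptions is missing.

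\textbf{What works.} The inductive step is in fact a cleaner proof of the paper's Lemma~\ref{lemma:star}. You take a perfect matching of $G_2$ through $yv_1$ and transport the cut $\partial_{G_1}(S)$ with $x\notin S$. This works for an arbitrary bicubic $G_2$. The paper instead uses a $3$-edge-colouring together with the fact that $G$ is $2$-factor Hamiltonian.

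\textbf{First problem: the definition of $\mathcal{F}$.} You have misremembered the class. It is generated from \emph{both} the Heawood graph $H_0$ and $K_{3,3}$, so $n=12k+2$ is false. All four exceptional graphs are built from $K_{3,3}$ alone: $\overline{F}_0=K_{3,3}$, $\overline{F}_1=K_{3,3}*K_{3,3}$, and specific three- and four-fold products. Under your definition there would be no exceptions at all, since $H_0$ has a separating matching of size $6$ and your lemma propagates it. So your reduction correctly handles every graph whose construction uses $H_0$. But the real content of the theorem lies in the $K_{3,3}$-only graphs, and your proposal does not treat them.

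\textbf{Second problem: the sketch for the ``main obstacle'' fails as described.}
\begin{itemize}
\item $K_{3,3}$ is not decomposable, so there is no separating matching of size $n_i/2-2$, or of any size, to start from.
\item $\overline{F}_1,\overline{F}_2,\overline{F}_3$ are themselves products of exceptional factors, while $H_1$ is also $(\overline{F}_1,u)*(K_{3,3},v)$ and is not exceptional. Whether $(\overline{F}_i,u)*(K_{3,3},v)$ is exceptional depends on the choice of $u$. So no uniform ``recover the lost edge through $C$'' argument can exist.
\item Taking the base ``up to the size where exceptions stop'' presupposes that they stop, which is exactly what must be shown.
\end{itemize}

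\textbf{What the paper does instead.} It carries out a finite analysis using the sequential definition:
\begin{itemize}
\item Show that $\overline{F}_1,\overline{F}_2,\overline{F}_3$ admit no separating matching of size $n/2-1$. Each $K_{2,3}$ block is nondecomposable, which forces the whole $3$-edge cut around some $K_{2,3}$ into the matching.
\item Enumerate the non-isomorphic products $(\overline{F}_i,u)*(K_{3,3},v)$: two for $i=1$, four for $i=2$, three for $i=3$.
\item Exhibit separating matchings of size $n/2-1$ for all of them except $\overline{F}_2$ and $\overline{F}_3$.
\end{itemize}
Only then does the star-product lemma close the induction. Every $K_{3,3}$-only construction sequence either stays among $\overline{F}_0,\dots,\overline{F}_3$ or passes through one of the good graphs $H_1,\dots,H_7$.

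If you keep your symmetric two-factor formulation instead, you would additionally need to show that every non-exceptional graph has a factorisation with a non-exceptional factor. You would also have to check directly the small products of two exceptional graphs for which no such factorisation exists.
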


Our final result concerns claw-free cubic graphs. It is known that every connected claw-free graph with an even number of vertices has a perfect matching \cite{LasVergnas1975,Sumner1974}, and in particular this holds for connected claw-free cubic graphs. We strengthen this by showing that such a graph always admits a disconnecting perfect matching.

\begin{restatable}{theorem}{clawfree}\label{th:clawfree-n2}
Every connected claw-free cubic graph distinct from $K_4$ has a disconnecting perfect matching.
\end{restatable}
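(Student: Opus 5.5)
We prove the statement using the standard structure of connected claw-free cubic graphs. If $G \neq K_4$ is connected, claw-free and cubic, then every vertex lies in a triangle. Since $G\neq K_4$, no vertex lies in three triangles. So the triangles are grouped into two kinds of units: \emph{diamonds}, meaning two triangles sharing an edge ($K_4$ minus an edge), and isolated triangles. The vertex set of $G$ is partitioned by these units. Each diamond has two vertices of degree $2$ inside it and so sends out two edges. Each isolated triangle sends out three edges, one per vertex.

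The first case is when $G$ contains no diamond, so that $V(G)$ is partitioned into vertex-disjoint triangles. Take $M$ to be the set of edges not lying in any triangle. This is a perfect matching: each vertex has exactly one edge leaving its triangle. Then $G-M$ is the disjoint union of the triangles. It is disconnected as soon as there are at least two triangles, and this holds because $n\ge 6$ when $G\neq K_4$.

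The second case is when $G$ contains a diamond $D$ on $\{a,b,c,d\}$ with central edge $bc$. Here $a,d$ are the vertices of degree $2$ in $D$, with outside neighbours $a'$ and $d'$. If $ad\in E(G)$ then $G=K_4$, so assume not. Every perfect matching contains either $bc$ together with $aa'$ and $dd'$, or contains $ab,cd$ (or $ac,bd$). The natural choice is a perfect matching $M$ containing $ab$ and $cd$, so that $\{aa',dd'\}\cap M=\emptyset$. Then $G-M$ contains the $4$-cycle $a\,c\,b\,d$, which we want to be a component. For that we instead want both $aa'$ and $dd'$ in $M$, which forces $bc\in M$. In that case the component of $G-M$ containing $D$ is the path $a\,b\,d$ together with $a\,c\,d$, i.e.\ the $4$-cycle $abdc$ with its chord $bc$ removed. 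This $4$-cycle is a whole component of $G-M$, and $G-M$ is disconnected provided $G\neq D$ plus a single edge. That situation is impossible: $a'\neq d$ and $d'\neq a$ because $ad\notin E(G)$.

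The task therefore reduces to showing that $G-\{a,b,c,d\}$, after accounting for $a',d'$, has a suitable perfect matching. Concretely, let $H = G - V(D) - \{a',d'\}$; we need a perfect matching of $H$. The cleaner route is to contract. Replace $D$ by a single edge $a'd'$ (or, if $a'=d'$, handle that separately). Then use induction on the number of diamonds: if $G'$ has a perfect matching containing the new edge $a'd'$ we are not directly done, so instead we remove $D\cup\{a',d'\}$ and argue that the rest has a perfect matching. We get this by applying the result of Las Vergnas and Sumner (a connected claw-free graph of even order has a perfect matching) componentwise to $G-V(D)-\{a',d'\}$. This graph is claw-free, since it is an induced subgraph of $G$, and it has $n-6$ vertices, which is even. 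Removing $a'$ and $d'$ may however create odd components.

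The main obstacle is this parity issue. If $G-V(D)-\{a',d'\}$ has an odd component, I would switch strategy and use the other matching, with $ab,cd\in M$. In that case $G-M$ contains $a\,c\,b\,d$ only via $bc$ and $ad$, which is not a cycle. So the fallback instead exploits a different unit: choose another diamond, or an isolated triangle whose three outgoing edges can all be put in $M$ to isolate that triangle. To carry this out I would use the global structure: the graph obtained by collapsing units is a multigraph in which diamonds are degree-$2$ vertices and triangles are degree-$3$ vertices, i.e.\ a subdivided cubic multigraph or a cycle of diamonds. A cycle of diamonds (a ring) is handled directly by choosing alternating patterns. Otherwise, take a perfect matching of the underlying cubic multigraph via Petersen if it is bridgeless, or via a careful choice around bridges, and lift it. Triangles not covered by a chosen edge become components, and chains of diamonds are matched through appropriately. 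Ensuring at least two components in every configuration, especially when bridges are present, is the step I expect to need the most case work.
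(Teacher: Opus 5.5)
\textbf{Verdict: there is a genuine gap in Case~2.} Your unit decomposition and Case~1 are correct: every vertex lies in a triangle, triangles are either isolated or pair into diamonds, and these units partition $V(G)$. In Case~2 you also found the right local picture: with $bc,aa',dd'\in M$ the diamond becomes the $4$-cycle $abdc$, a component of $G-M$. But Case~2 is never completed.

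\textbf{Why the plan as written fails.} The reduction to a perfect matching of $G-V(D)-\{a',d'\}$ is abandoned over a parity worry. The fallback is only a sketch, and it points at the wrong object.
\begin{itemize}
\item A perfect matching $N$ of the collapsed cubic multigraph selects \emph{one} edge at each triangle. In the lift, the other two vertices of each triangle are matched to each other, so the triangle stays joined in $G-M$ to two neighbouring units. Hence $G-M$ is connected as soon as $H-N$ is a Hamilton cycle and no edge of $N$ carries a diamond.
\item When $H$ has bridges, $N$ need not exist at all, since cubic graphs with bridges can lack perfect matchings. So the ``careful choice around bridges'' cannot be carried out in general.
\item ``Alternating patterns'' on a ring of diamonds do not give a perfect matching. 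Once one diamond uses its central edge, its ends are matched into the neighbouring diamonds, which are then forced to use their central edges as well, all the way around the ring.
\item Minor point: $a'=d'$ cannot occur, since $a'$ would then be the centre of a claw.
\end{itemize}

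\textbf{The missing idea: run your Case~1 construction globally.} Let $M$ consist of all edges lying in no triangle (equivalently, edges joining distinct units) together with the central edge of every diamond.
\begin{itemize}
\item Every vertex of an isolated triangle, and every end of a diamond, has exactly one edge leaving its unit. The two central vertices of a diamond are covered by its central edge. So $M$ is a perfect matching.
\item $G-M$ is the disjoint union of the isolated triangles and the $4$-cycles $abdc$, one component per unit.
\item No single unit is cubic: a triangle has degree $2$, and a diamond has ends of degree $2$ because $ad\notin E(G)$. So $G$ has at least two units, and $M$ disconnects $G$.
\end{itemize}
This also shows your parity obstruction never arises: the restriction of this $M$ is a perfect matching of $G-V(D)-\{a',d'\}$.

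\textbf{Comparison with the paper.} This is the same matching the paper uses: connecting plus central edges on a ring of diamonds, and $E(H)$ plus the central edges of the strings otherwise. The paper reaches it through Oum's characterization and a separate lemma for graphs with bridges. Your partition into units makes both unnecessary, and handles bridges automatically.
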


\section{Decomposable subcubic graphs} \label{sec:nondecomp}

In this section, we focus on characterizing which subcubic graphs are decomposable. 
Moshi~\cite{Moshi89} showed the following result regarding decomposable  
cubic graphs.
\begin{theorem}[Moshi, 1989] \label{thm:moshi}
    If $G$ is a connected cubic graph that is not isomorphic to $K_4$ 
    or $K_{3,3}$, then $H$ is decomposable. 
\end{theorem}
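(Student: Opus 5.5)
We need to prove Moshi's theorem: every connected cubic graph other than $K_4$ and $K_{3,3}$ has a matching cut, i.e. a partition $V=A\cup B$ with both parts nonempty such that the edges between $A$ and $B$ form a matching. Equivalently, every vertex has at most one neighbour on the other side.

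\textbf{Trivial cases.} If $G$ has a bridge $e$, then $\{e\}$ is a matching cut and we are done. If $G$ has a $2$-edge-cut $\{e,f\}$ with $e,f$ disjoint, it is a matching cut. If $e,f$ share a vertex $v$, move $v$ to the other side. Then exactly one edge of $v$ crosses, and we obtain a bridge-like $1$-edge cut, unless the shore becomes empty, which cannot happen in a cubic graph.

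\textbf{Strategy: grow a set $A$.} In the $3$-edge-connected case I would grow a set $A$ greedily. Call a vertex outside $A$ \emph{bad} if it has at least $2$ neighbours in $A$, and repeatedly absorb bad vertices into $A$. The process stops when every outside vertex has at most one neighbour in $A$. Each $a\in A$ must also have at most one neighbour outside. To secure this, start $A$ from a structure whose boundary vertices each have two neighbours inside, namely a shortest cycle $C$ (of girth $g$).
\begin{itemize}
\item Each vertex of $C$ has exactly one neighbour off $C$, since a chord would give a shorter cycle unless $g=3$ and $K_4$ is involved.
\item Every vertex added during closure has at least $2$ neighbours in $A$ at insertion time, hence at most one neighbour outside.
\end{itemize}
So the closure $A$ of $V(C)$ always satisfies the condition on the $A$-side. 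The edges leaving $A$ therefore form a matching: each outside vertex has at most one $A$-neighbour, and each inside vertex has at most one outside neighbour. The only failure is $A=V$.

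\textbf{Main obstacle.} The hard step is showing that, outside $K_4$ and $K_{3,3}$, some choice of starting cycle has a closure that is not all of $V$. I would first count edges. When the closure absorbs a vertex with $2$ or $3$ neighbours in $A$, the number of boundary edges changes by $-1$ or $-3$, starting from $g$ boundary edges. Reaching $A=V$ requires the count to hit $0$. I would then use the freedom of choice to avoid this.
\begin{itemize}
\item \emph{$g=3$ (a triangle $T$).} If the closure of $T$ is everything, analyse small cases. Replacing $T$ by a triangle-containing $K_4-e$, or using the three outside neighbours of $T$, either yields $K_4$ or leads to a different seed, such as a $4$-cycle through an edge of $T$, whose closure stays proper.
\item \emph{$g=4$.} Closure of a $4$-cycle equals $V$ essentially only when the graph is built from overlapping $4$-cycles. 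Tracking the neighbours should force $K_{3,3}$ or the $3$-prism. For the prism, take two disjoint triangles as the sides; the three rungs form a matching cut.
\item \emph{$g\ge 5$.} Two outside vertices with two neighbours on $C$ would create a shorter cycle. So the first absorption step already stalls or only adds vertices slowly, and a counting argument via $n\ge 10$ should give a proper closure.
\end{itemize}
I expect the detailed case analysis for $g\in\{3,4\}$ to be the main work.
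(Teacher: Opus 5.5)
The paper gives no proof of this statement; it is quoted from Moshi~\cite{Moshi89}, so your argument has to stand on its own. Your framework is sound. Start from the vertex set of a cycle and repeatedly absorb outside vertices having at least two neighbours in $A$. Then every vertex of $A$ has at most one neighbour outside $A$. At termination, every outside vertex has at most one neighbour in $A$. So the set $\partial(A)$ of edges leaving $A$ is a matching cut whenever $A\neq V$.

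\textbf{The gap.} The decisive step, that $A\neq V$ for every connected cubic graph other than $K_4$ and $K_{3,3}$, is not proved. For $g\in\{3,4\}$ you offer expectations (``analyse small cases'', ``should force''), not arguments, and two claims are wrong:
\begin{itemize}
\item The $3$-prism has girth $3$, so it cannot arise in your $g=4$ case.
\item For $g\ge 5$, already a \emph{single} outside vertex with two neighbours on $C$ closes a cycle of length at most $\lfloor g/2\rfloor+2<g$. So nothing is ever absorbed and $A=V(C)\neq V$. No counting via $n\ge 10$ is needed.
\end{itemize}

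\textbf{How to close it.} The missing idea is already in your boundary count; you did not push it to its conclusion.
\begin{itemize}
\item A shortest cycle is induced, so initially $|\partial(A)|=g$.
\item Each absorption lowers $|\partial(A)|$ by at least $1$.
\item If the closure were all of $V$, there would be $n-g$ absorptions. The last absorbed vertex has all three neighbours in $A$, so that step lowers the count by $3$.
\item Hence $g\ge (n-g-1)+3$, that is, $n\le 2g-2$.
\end{itemize}
With $g\ge 5$ ruled out as above, $g=3$ forces $n=4$, so $G=K_4$. Also $g=4$ forces $n=6$ with no triangle, so $G=K_{3,3}$, since the only other cubic graph on six vertices, the prism, has triangles.

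Therefore the closure of \emph{any} shortest cycle yields a matching cut in every other connected cubic graph. You need no alternative seeds, no case analysis by girth, and no preliminary treatment of bridges and $2$-edge-cuts. That preliminary part of your proposal is correct but superfluous.
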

Furthermore, he argued that every subcubic graph with at least $8$ vertices is decomposable. 
Let $\overline{\mathcal{D}}$ denote the family of subcubic graphs that are not decomposable. 
Note that $K_3$ is the only cycle that belongs to $\overline{\mathcal{D}}$. 
Now, let $G \in \overline{\mathcal{D}}$ such that $G \neq K_3, K_4, K_{3,3}$. 
Our objective will be to characterize the structure of such a graph $G$. First, 
note that $G$ is $2$-edge-connected and, since $G \neq K_3$, we have that $\Delta(G) = 3$. 
Also, by Theorem~\ref{thm:moshi}, we have that $\delta(G) = 2$. 

We call a path, in $G$, \textit{subdivided} if it contains at least two edges, 
the ends of the path have degree at least three in $G$, and every internal vertex 
in the path have degree two in $G$. Observe that every subdivided path in $G$ has 
exactly two edges, otherwise, $G$ has a matching cut. Thus,~$G$ satisfies the following 
property: 
\begin{enumerate}[$(i)$]
    \item every vertex of degree two is adjacent only to vertices of degree three in $G$. \label{it:a}
\end{enumerate}

In what follows, we relate the study of nondecomposable subcubic graphs to 
the study of nondecomposable cubic multigraphs (graphs that may contain parallel 
edges and loops). In this regard, given a subcubic graph $G$ which is not a cycle, 
if we replace every subdivided path with an edge, then we obtain a cubic multigraph. 
Let $\mathcal{C}(G)$ be the graph obtained from this 
procedure. Our first result states the following. 
\begin{proposition} \label{prop:cont}
    Let $G$ be a subcubic graph. If $\mathcal{C}(G)$ has 
    a matching separator, then $G$ has a matching cut.  
\end{proposition}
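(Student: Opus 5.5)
Given a matching cut $F$ of $\mathcal{C}(G)$ (a matching that is also an edge cut, which is what ``matching separator'' means here by the equivalence noted in the introduction), I will construct a matching cut of $G$ directly. Each edge $f$ of $\mathcal{C}(G)$ corresponds either to an edge of $G$ joining two branch vertices (vertices of degree $3$), or to a subdivided path $P_f$ of $G$. Recall that $G$ is not a cycle, so $\mathcal{C}(G)$ is defined and its vertex set is the set of degree-$3$ vertices of $G$. Let $(X,Y)$ be the bipartition of $V(\mathcal{C}(G))$ induced by $F$: both sides are nonempty, $F$ is exactly the set of edges between $X$ and $Y$, and $F$ is a matching, so no edge of $F$ is a loop.

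To extend $(X,Y)$ to a partition $(X',Y')$ of $V(G)$, I treat internal vertices of subdivided paths as follows. For $f\in F$ with $f=xy$, $x\in X$, $y\in Y$: if $f$ is a genuine edge of $G$ there is nothing to do; otherwise $P_f=x v_1\dots v_k y$, and I put $v_1,\dots,v_k$ into $X'$. The only crossing edge on $P_f$ is then $v_k y$. For $f\notin F$, both ends of $f$ lie on the same side (this includes loops), and all internal vertices of $P_f$ go to that side, so $P_f$ contributes no crossing edge. This rule also fixes where each degree-$\le 2$ vertex of $G$ lies on a subdivided path. If $G$ has vertices that lie on no subdivided path, those are degenerate cases I must rule out or treat separately; presumably $G$ is connected and subcubic with minimum degree $\ge 2$ in the intended setting, or pendant vertices can be placed alongside their neighbour.

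The crossing edges of $(X',Y')$ are then in bijection with $F$: each $f\in F$ yields exactly one edge $e_f$ of $G$, namely $f$ itself or $v_k y$. The main point, and the only place requiring care, is that these edges form a matching in $G$. Their $Y$-endpoints are distinct branch vertices, since $F$ is a matching. Their $X$-side endpoints are either distinct branch vertices or internal vertices $v_k$ of distinct paths, and a branch vertex never coincides with an internal vertex. Shared endpoints could only arise if two elements of $F$ shared an end in $\mathcal{C}(G)$. Since $X$ and $Y$ are nonempty and $F\neq\emptyset$ whenever the graph is connected, $(X',Y')$ is a genuine edge cut of $G$ whose edge set is a matching, that is, a matching cut. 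The subtle step is the parallel-edge case: two parallel edges between $x\in X$ and $y\in Y$ cannot both be in $F$, because they share ends, and they cannot both be outside $F$ since they cross the cut. So $F$ being a matching in the multigraph already excludes this case, and the argument goes through.
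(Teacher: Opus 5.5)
Your proof is correct and follows the same idea as the paper. Each edge of the matching in $\mathcal{C}(G)$ that comes from a subdivided path is replaced by a single edge of that path, and the internal degree-two vertices then attach to one side.

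The difference is only in presentation. You first pass to a matching cut $(X,Y)$, extend the bipartition, and take the edge at the $Y$-end of each crossing path. The paper keeps the separating matching and picks an arbitrary edge from each such path. Your version spells out why the chosen edges form a matching, which the paper leaves implicit.
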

\begin{proof}
    Let $M$ be a matching separator of $\mathcal{C}(G)$. Each edge in $M$ 
    is an edge of $G$ or is a subdivided path. To obtain a matching separator of $G$, we choose 
    the edges of $M$ that also belong to $E(G)$ and one edge from each path that corresponds 
    to an edge in $M$. Since the internal vertices of those paths have degree two, we obtain 
    a matching separator. 
\end{proof}

The previous result tells us that to find the nondecomposable subcubic graphs (different 
from $K_3$), it is important to discover the nondecomposable cubic multigraphs first. In 
what follows, we denote by $S_2$ the unique cubic multigraph with two vertices. 

\begin{figure}
    \centering
    \includegraphics[width=\linewidth]{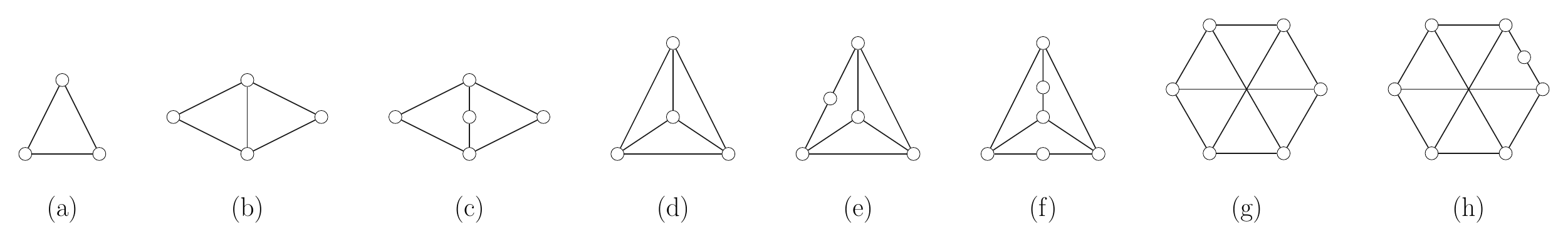}
    \caption{Family $\overline{\mathcal{D}}$ of nondecomposable subcubic graphs: (a)~$\overline{D}_0$;
    (b)~$\overline{D}_1$; (c)~$\overline{D}_2$; (d)~$\overline{D}_3$; (e)~$\overline{D}_4$; (f)~$\overline{D}_5$; 
    (g)~$\overline{D}_6$; and (h)~$\overline{D}_7$.}
    \label{fig:D}
\end{figure}

\begin{theorem} \label{thm:multi}
    Let $H$ be a $2$-edge-connected cubic multigraph. If $H \notin \{ S_2, K_4, K_{3,3} \}$, 
    then $H$ is decomposable. 
\end{theorem}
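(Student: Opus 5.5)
We argue by reduction to Theorem~\ref{thm:moshi}, handling the multigraph features (loops and parallel edges) directly. Since $H$ is $2$-edge-connected, it has no loops: a vertex carrying a loop has only one other edge, which is a bridge. So the only non-simple feature is parallel edges. If $H$ is simple, then $H$ is a connected cubic graph, and Theorem~\ref{thm:moshi} gives the claim unless $H\in\{K_4,K_{3,3}\}$. Thus we may assume $H$ has a pair of parallel edges. A triple edge forces $H=S_2$, so we may assume that $u,v$ are joined by exactly two edges. Let $u'$ be the third neighbour of $u$ and $v'$ the third neighbour of $v$.

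\textbf{Case 1: $u'=v'=w$.} Let $w'$ be the third neighbour of $w$. Then $ww'$ is a bridge, unless $w'\in\{u,v\}$, which is impossible by degrees. This contradicts $2$-edge-connectivity, so $u'\neq v'$.

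\textbf{Case 2: $u'\neq v'$ and $u'v'\notin E(H)$.} The digon $\{u,v\}$ is separated from the rest of $H$ by the two edges $uu'$ and $vv'$. These edges are disjoint, because $u'\neq v'$ and $u'\ne v$, $v'\ne u$. They therefore form a matching cut, provided $V(H)\setminus\{u,v\}$ is nonempty, which holds since $H\neq S_2$. Notice that this argument does not actually need $u'v'\notin E(H)$; it needs only $u'\ne v'$. So every $2$-edge-connected cubic multigraph with a digon and $u'\neq v'$ is decomposable.

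Combining the cases: a digon gives decomposability (Case 1 being impossible), and otherwise $H$ is simple, where Moshi's theorem applies.

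The only step needing care is checking that the cut $\{uu',vv'\}$ really is a matching and that both sides are nonempty. Both follow from $u'\neq v'$ and $|V(H)|>2$. We also have to make sure that Theorem~\ref{thm:moshi} applies to simple connected cubic graphs, which is exactly its stated form. I expect no real obstacle beyond ruling out loops and triple edges cleanly.
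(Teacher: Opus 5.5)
Your proof is correct, and it shares the paper's overall skeleton: loops are excluded by $2$-edge-connectivity, the simple case goes to Moshi's theorem, and a parallel pair $uv$ is cut off by its two outgoing edges $uu'$ and $vv'$. The difference is how you justify the key point $u'\neq v'$, which is what makes $\{uu',vv'\}$ a matching.

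The paper first invokes Petersen's theorem to get a perfect matching $M$. It disposes of the case where $H-M$ is disconnected, and otherwise works with the Hamiltonian cycle $C=H-M$, which has length at least $4$. Implicitly, one copy of $uv$ must then lie in $M$ and the other on $C$. So $uu'$ and $vv'$ are the cycle edges at the two ends of $uv$, and $u'\neq v'$ follows from $|C|\geq 4$.

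You instead rule out $u'=v'=w$ locally: the third edge at $w$ would be a bridge. This avoids Petersen's theorem and the Hamiltonian-cycle detour entirely. It also makes explicit the configuration that the paper handles only implicitly through $|C|\geq 4$.

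The paper's route fits the perfect-matching/Hamiltonian-cycle framework it reuses in the proof of Theorem~\ref{th:cubicn2-2}. For this statement, however, that machinery is unnecessary, and your argument is the more elementary and self-contained one.

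A minor presentational point: the condition $u'v'\notin E(H)$ in your Case~2 heading is superfluous, as you note yourself. A clean split on $u'=v'$ versus $u'\neq v'$ would suffice.
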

\begin{proof}
    First, note that $H$ has no loops since it is $2$-edge-connected. Moreover,  
    by Petersen's Theorem~\cite{Petersen1900}, $H$ has a perfect matching, say $M$. If $H - M$ 
    is disconnected, then $H$ is decomposable. Now, suppose that this is not the case and 
    let $C := H - M$ be a Hamiltonian cycle. Observe that $H$ has at least $4$ vertices, 
    otherwise, it implies that $H$ is isomorphic to~$S_2$. In what follows, we distinguish 
    two cases depending on whether $H$ has a parallel edge or not. 

    First, if $H$ has no parallel edges, then $H$ is a simple graph. Thus, by Theorem~\ref{thm:moshi}, 
    $H$ is decomposable. Now, consider that $H$ has a multiple edge, say $uv$. Observe that, 
    since $|C| \geq 4$, the edges incident to $u$ and $v$ different from $uv$ form 
    a matching cut of $H$. Therefore, $H$ is decomposable.
\end{proof}

Finally, by Theorem~\ref{thm:multi}, Proposition~\ref{prop:cont} and property~\ref{it:a}, 
if $G \in \overline{\mathcal{D}}$ and $G \neq K_3, K_4, K_{3,3}$, then $G$ arises from a nondecomposable 
cubic multigraph in which we replace, at most once, each edge by a path of length two. This 
already implies that there are finitely many nondecomposable subcubic graphs. 
The last result of this section shows the complete family of nondecomposable subcubic graphs. 

\nondecomp*
\begin{proof}
    By Theorem~\ref{thm:multi}, we only need to consider the simple graphs that 
    arise from a subdivisions of $S_3$, $K_4$, or $K_{3,3}$. First, consider 
    that $G$ is a graph obtained by subdividing, at most once, each of the edges 
    in $S_3$. Since $S_3$ only contains three parallel edges, $G$ is obtained by 
    subdividing either two or three of those edges. In what follows, we show that 
    in either case, $G$ is nondecomposable. Let $u$ and $v$ be the vertices of 
    degree $3$ in $G$, and let $M$ be any matching in $G$. As there is at most one 
    edge of $M$ incident to $u$ or $v$, and $G$ contains three edge-disjoint paths 
    between $u$ and $v$, we have that these vertices are connected in $G - M$. Moreover, 
    each of the other vertices of $G$ have degree $2$, thus, each one is adjacent either to $u$ 
    or $v$ in $G - M$. Therefore, $G - M$ is connected for any matching $M$ in $G$. 
    Observe that if we subdivide two edges in $S_2$, we obtain $K_4 - e = \overline{D}_1$, 
    and if we subdivide three edges, we obtain $K_{2,3} = \overline{D}_2$.

    Before we continue, we show a claim that will be useful in the rest of the proof.
    \begin{claim} \label{cl:cut}
        Let $H$ be a cubic graph. If $G$ is a graph obtained from $H$ by subdividing at 
        least two edges incident to a common vertex, then $G$ has a matching cut.
    \end{claim}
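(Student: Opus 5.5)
Let $v$ be a vertex of $H$ such that at least two of its incident edges, say $vx$ and $vy$, are subdivided in $G$, with new vertices $a$ on $vx$ and $b$ on $vy$. Let $z$ be the third neighbour of $v$ in $H$, and let $c$ be the subdivision vertex of $vz$ if that edge is also subdivided. The plan is to isolate $v$ together with the vertices $a$, $b$ (and $c$ if present) by cutting a matching consisting of edges at distance one from $v$. Concretely, let $S=\{v,a,b\}$ if $vz$ is not subdivided, and $S=\{v,a,b,c\}$ otherwise. The edges leaving $S$ are $ax$, $by$, and either $vz$ or $cz$. The task is to check that these three edges form a matching whose removal disconnects $G$.

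\textbf{Matching property.} The endpoints inside $S$ are pairwise distinct: they are $a$, $b$, and $v$ or $c$. The outside endpoints are $x$, $y$, $z$. These are distinct because $H$ is simple (as a cubic graph), so $v$ has three distinct neighbours. Hence the cut edges pairwise share no endpoint. In the case where $vz$ is not subdivided, $v$ is incident to the cut edge $vz$ only, while $a$ and $b$ each carry exactly one cut edge. So the set is a matching in all cases.

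\textbf{Disconnection.} After deleting these edges, $S$ is separated from the rest of $G$. It remains to see that $V(G)\setminus S$ is nonempty, which holds because it contains $x$, $y$, $z$. Thus the edge cut $\partial(S)$ is a matching cut.

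\textbf{Main obstacle.} The only delicate point is degeneracy, namely whether $x$, $y$, $z$ could coincide with one another or with vertices of $S$, which would break the matching property. This is ruled out by simplicity of $H$, and I would state that explicitly. A second minor point is that the argument needs no assumption on the other subdivided edges elsewhere in $G$, since only edges incident to $S$ are involved. If the claim were intended for cubic multigraphs as well (e.g.\ $S_2$), the parallel-edge case would need separate handling. Since the claim says ``cubic graph'', I would treat $H$ as simple.
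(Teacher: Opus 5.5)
Your proof is correct and is essentially the paper's argument: both isolate the common vertex together with its adjacent subdivision vertices by cutting the three edges at distance one from it, and both rely on $H$ being simple to get the matching property. The only cosmetic difference arises when the third edge is also subdivided: the paper keeps the three-vertex set and cuts the edge from the central vertex to that third subdivision vertex, whereas you put the subdivision vertex into $S$ and cut its outer edge instead.
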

    \begin{proof}
        Let $G$ be a graph obtained by subdividing at least to edges from $H$, and let  
        $uv$ and $uw$ in $E(H)$ be two of those edges. Consider that   
        $x$ and $y$ are the vertices that arise from these subdivisions such that $ux$, $xv$, 
        $uy$ and $yw$ belong to $E(G)$. Moreover, let $z$ be the neighbor of $u$ in $G$ different 
        from $x$ and $y$. Note that, $M = \{uz, xv, yw \}$ is a matching that separates $u$ in~$G$.
    \end{proof}
    Now, consider that $G$ is a graph obtained by subdividing, at most once, each of the edges 
    in~$K_4$. Since a maximum matching of $K_4$ has size $2$, any subset of at least $3$ edges will 
    have two edges incident to a common vertex. Thus, by Claim~\ref{cl:cut}, the resulting graph 
    will have a matching cut. Therefore, we need to consider only two cases: (a) $G$ is obtained by 
    subdividing a single edge; and (b) $G$ is obtained by subdividing two edges that form a matching 
    in~$K_4$. In what follows $X$ denotes the set of vertices of degree $3$ in $G$.

    First, suppose that $G$ is obtained by subdividing a single edge, and let $M$ be a matching 
    in~$G$. Observe that, this case corresponds to $G = \overline{D}_5$. We will show that~$X$ is 
    contained in a connected component of $G - M$. 
    Note that this implies that $G - M$ is connected. Since the unique vertex of degree $2$ in $G$ has 
    a neighbor in $X$ when we consider $G - M$. Let $u$ and $v$ be distinct vertices in $X$. Observe 
    that, in $G - M$, these vertices have at least two neighbors. Thus, the components that contain $u$ 
    and $v$ have at least $3$ vertices. Since $G$ has $5$ vertices, we have that~$u$ and~$v$ belong to the 
    same connected component in $G - M$. As $u$ and $v$ are arbitrary vertices in $X$, we 
    have that $G$ is nondecomposable. 

    Now, suppose that $G$ is obtained by subdividing two edges that form a matching in $K_4$, and let 
    $M$ be a matching in $G$. Note that, in this case $G = \overline{D}_5$. As in the previous case, we 
    will show that $X$ is contained in a connected component of $G - M$. Let $u \in X$. Since we subdivided 
    the edges of a matching of size two in $K_4$, 
    we have that, in $G$, the vertex $u$ is adjacent to one vertex of degree $2$, say $z$, and two vertices 
    of degree $3$, say $x$ and $y$. Furthermore, as $M$ is a matching, $u$ is adjacent to two vertices 
    in $\{x, y, z\}$. Without loss of generality, suppose that $x$ and $z$ are neighbors of $u$ in $G - M$ 
    (the other cases are symmetric). Note that $xz \notin E(G)$, otherwise, $ux$ is a parallel edge in $K_4$, 
    a contradiction. Thus, as $x$ has degree three, it is adjacent to a vertex different from $u$ 
    and $z$ in $G - M$. Therefore, the connected component in $G - M$ that contains $u$ has at least $4$ 
    vertices. Since $G$ has $6$ vertices, it implies that $X$ is inside a connected component of $G$. 

    Finally, suppose that $G$ is a graph obtained by subdividing, at most once, each of the edges 
    in~$K_{3,3}$. Consider that $U = \{x, y, z\}$ and $W = \{u, v, w\}$ is the bipartition 
    of $K_{3,3}$. First, we will show that if we subdivide at least two edges to obtain $G$, then 
    $G$ is decomposable. By Claim~\ref{cl:cut}, we may suppose that these edges form a matching in $K_{3,3}$.
    Let $e, f \in E(K_{3,3})$ be two of those edges. Without loss of generality, suppose that $e = ux$ 
    and $f = vy$ (the other cases are analogous), and let $s$ and $t$ be the vertices
    that resulted from the subdivision of $e$ and $f$, respectively. Then, 
    the matching $M = \{us, wx, ty, vz\}$ separates $G$ in exactly two components, one of which 
    contains the vertices $\{s,x,v,t\}$. 

    The only case left to consider is when~$G$ is obtained by subdividing a single edge in $K_{3,3}$. 
    Thus, $G = \overline{D}_7.$ We will show that the vertices in $U \cup W$ are contained in a connected 
    component of $G - M$, for any matching $M$ in $G$. Let $a \in U$. Since $a$ has degree $3$ in $G$, we have 
    that $a$ has two neighbors, say $b$ and $c$, in $G - M$. Moreover, as $G$ results from the subdivision 
    of one edge, we have that $b$ or $c$ (or both) belongs to $W$ and, thus, also has degree $3$. 
    Without loss of generality, suppose that $b \in W$. 
    Note that, if $c$ has degree $2$ in $G$, then $bc \notin E(G)$, otherwise, $bc$ would be 
    a parallel edge in $K_{3,3}$, a contradiction. Furthermore, if $c$ has degree $3$ in $G$, 
    then $c \in W$. In this case also $bc \notin E(G)$. Thus, $b$ has a neighbor in $G - M$ 
    that is different from $u$ and $c$. This implies that the connected component that contains $a$ 
    has at least $4$ vertices. A symmetric argument shows the same for every vertex of $W$. Therefore, 
    the vertices in $U \cup W$ belong to the same connected component of $G - M$, so $G$ is not decomposable. 
\end{proof}

\section{Matching separators in cubic graphs} \label{sec:cubic}

Given a matching~$M$ in a graph~$G$, we say that a path  
$P=\langle u_1, u_2, \ldots, u_k \rangle$ is an~$M$-\textit{reducing path}  
if $k$ is even, $u_i u_{i+1} \in M$ for every odd~$i$, and  
$u_i u_{i+1} \notin M$ for every even~$i$.  
We define
\[
M \otimes P = \bigl(M \setminus \{u_i u_{i+1} : i \text{ odd}\}\bigr) 
\cup \{u_i u_{i+1} : i \text{ even}\}.
\]

\cubic*

\begin{proof}
By Petersen's Theorem~\cite{Petersen1900}, $G$ has a perfect matching~$M$.  
If $G - M$ is disconnected, then $M$ is a separating matching, and we are done.  
Hence, we may assume that $G - M$ is connected.  
Since $G - M$ is $2$-regular, it contains a Hamiltonian cycle, say~$C$.

Let $C = \langle v_0, v_1, \ldots, v_{n-1} \rangle$.  
For $i,j \in \{0,\ldots,n-1\}$, we denote by $C[i,j]$ the $(v_i,v_j)$-subpath of $C$ obtained by traversing $C$ from $v_i$ to $v_j$ in the forward direction.  
We say that an edge of $G$ is a \textit{chord} if it joins two nonadjacent vertices of $C$.  
Two chords \emph{cross} if their endpoints appear in alternating order along $C$, and are \textit{parallel} otherwise.  
For each vertex $v_i$, let $p_i$ denote the unique neighbour of $v_i$ in $G$ that is not adjacent to $v_i$ on $C$.  
A \textit{tie} is a pair of crossing chords that induce a $C_4$ in $G$.  
Throughout this section, indices are taken modulo $n$.

We divide the proof into two cases, depending on whether $G$ contains a tie or not.  
We will use the following claim repeatedly.

\begin{claim}
\label{clm:reducing-path}
If $v_a v_b$ is a chord of $C$, then the path 
$\langle p_a, v_a, v_b, p_b \rangle$ is an $M$-reducing path. 
Moreover, if two such paths are edge-disjoint, then their successive application 
produces a matching of size $|M|-2$.
\end{claim}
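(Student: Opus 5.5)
The plan is to check directly from the definition of $M\otimes P$ that an $M$-reducing path on four distinct vertices lowers the size of a matching by exactly one and keeps it a matching. Then apply this twice.

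For the first assertion, write $P=\langle u_1,u_2,u_3,u_4\rangle=\langle p_a,v_a,v_b,p_b\rangle$. I will use the claim's intended reading of $p_a,p_b$: the $M$-partners of $v_a,v_b$, so that $p_av_a\in M$ and $v_bp_b\in M$. (Taken literally, with $p_a$ the non-$C$ neighbour, $p_a=v_b$ and the path degenerates.) Under this reading, the chord $v_av_b$ must be an edge of $G$ outside $M$, and I take that as the hypothesis. These three facts are exactly the requirements on $u_1u_2$, $u_2u_3$, $u_3u_4$ in the definition with $k=4$. The four vertices must also be distinct. We have $v_a\neq v_b$ since the chord is an edge, and $p_a\neq v_b$, $p_b\neq v_a$ since $v_av_b\notin M$. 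Finally, $p_a\neq p_b$, because otherwise $p_a$ would be covered by two edges of $M$. So $P$ is a genuine path and is $M$-reducing.

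Next compute $M\otimes P=(M\setminus\{p_av_a,v_bp_b\})\cup\{v_av_b\}$. Removing the two $M$-edges leaves $v_a$ and $v_b$ exposed, since $M$ is a matching. Adding $v_av_b$ therefore creates no conflict, so $M\otimes P$ is a matching with $|M\otimes P|=|M|-1$. In it, $p_a$ and $p_b$ are exposed and $v_av_b$ is matched.

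For the second assertion, let $P'=\langle p_c,v_c,v_d,p_d\rangle$ be a second such path, edge-disjoint from $P$. I will show that $P'$ is still reducing with respect to $M_1:=M\otimes P$.
\begin{itemize}
\item Its first and third edges $p_cv_c$ and $v_dp_d$ lie in $M$. They are not among the removed edges $p_av_a, v_bp_b$, by edge-disjointness, so they remain in $M_1$.
\item Its middle edge $v_cv_d$ differs from $v_av_b$, again by edge-disjointness, and is not in $M$, so it is not in $M_1$.
\end{itemize}
Hence $P'$ is $M_1$-reducing, and by the first part $M_1\otimes P'$ is a matching of size $|M_1|-1=|M|-2$.

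The only point needing care is this second step. One must make sure that applying $P$ first did not destroy the $M$-edges of $P'$ and did not add its middle edge, and edge-disjointness is exactly what prevents both. Vertex overlaps between $P$ and $P'$ are harmless. Each vertex of $P'$ that needs a partner, namely $v_c$ and $v_d$, is matched in $M_1$ only through an edge of $P'$ itself, so the swap stays consistent.
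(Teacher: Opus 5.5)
Your verification is correct. There is no argument in the paper to compare it with, because the paper states this claim without proof.

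One point of framing needs fixing. In the paper's setting $E(G)=E(C)\cup M$, so $p_a$ already \emph{is} the $M$-partner of $v_a$, and every chord lies in $M$. Your repair therefore does not reinterpret $p_a,p_b$. It changes the hypothesis: ``$v_av_b$ is a chord'' becomes ``$v_av_b\notin M$'', i.e.\ $v_av_b\in E(C)$. That is the right reading, because it is exactly the shape of the four-vertex paths the paper actually applies. For example, $\langle p_{i-1},v_{i-1},v_i,v_j\rangle$ has $v_{i-1}v_i\in E(C)$ and $v_j=p_i$.

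Two small remarks. First, every vertex of an $M$-reducing path is covered by one of that path's own $M$-edges. Hence edge-disjoint reducing paths are automatically vertex-disjoint, and your closing comment about vertex overlaps is vacuous. Second, the same bookkeeping works for a reducing path on $k$ distinct vertices: you remove $k/2$ edges of $M$, which exposes every vertex of the path, and add $k/2-1$ edges pairing its interior vertices. So any such path lowers $|M|$ by exactly one. That general form is what the six-vertex paths in Cases~1 and~2 of the paper actually rely on.
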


\vspace{3mm}
\noindent \textbf{Case 1:} $G$ contains a tie. \\

Let $(v_i v_j, v_{i+1} v_{j+1})$ be a tie in $G$.
If both $|C[i+1,j]|$ and $|C[j+1,i]|$ are less than $3$, then $G$ is either $K_4$ or $K_{3,3}$, 
a contradiction. Thus, at least one of them has size at least $3$. 
Without loss of generality, assume that $|C[j+1,i]| \ge 3$.

If $|C[i+1,j]| \ge 3$, then applying Claim~\ref{clm:reducing-path} to the chords 
$v_{i+1}v_{j+1}$ and $v_i v_j$, we obtain
\[
M' = (M \otimes \langle p_{i+2}, v_{i+2}, v_{i+1}, v_{j+1}, v_{j+2}, p_{j+2} \rangle)
\otimes \langle p_{j-1}, v_{j-1}, v_j, v_i, v_{i-1}, p_{i-1} \rangle,
\]
where $|M'| = |M| - 2$. 
Moreover, the vertices $\{v_i, v_{i+1}, v_j, v_{j+1}\}$ lie in a component of $G - M'$ 
that does not contain $v_{i-1}$ (see Figure~\ref{fig:cubic-cases12}(a)), and we are done.

If $|C[i+1,j]| \le 2$ and $p_{i-1}, p_{j+2} \in V(C[j+1,i])$, then applying 
Claim~\ref{clm:reducing-path} yields
\[
M' = (M \otimes \langle p_{i-1}, v_{i-1}, v_i, v_j \rangle)
\otimes \langle p_{j+2}, v_{j+2}, v_{j+1}, v_{i+1} \rangle,
\]
where $|M'| = |M| - 2$. 
In this case, the vertices $\{v_i, v_{i+1}, v_j, v_{j+1}\}$ lie in a component of $G - M'$ 
that does not contain $v_{i-1}$ (see Figure~\ref{fig:cubic-cases12}(b)), and we are done.

Hence, we may assume that $|C[i+1,j]| = 2$ and, without loss of generality, that $p_{i-1} = v_{i+2}$.
Applying Claim~\ref{clm:reducing-path} again, we obtain
\[
M' = (M \otimes \langle p_{i-2}, v_{i-2}, v_{i-1}, v_{i+2} \rangle)
\otimes \langle p_{j+2}, v_{j+2}, v_{j+1}, v_{i+1} \rangle,
\]
where $|M'| = |M| - 2$. 
Moreover, the vertices $\{v_{i-1}, v_i, v_{i+2}, v_j, v_{j+1}\}$ lie in a component of $G - M'$ 
that does not contain $v_{j+2}$ (see Figure~\ref{fig:cubic-cases34}(a)). 
Therefore, the proof follows.

\begin{figure}[H]
    \centering
    \begin{subfigure}{0.49\textwidth}
        \centering
        \scalebox{0.7}{
            \begin{tikzpicture}
                \draw[black, thick] (0,0) circle (3cm);
                
                \foreach \i/\name/\label/\anchor in {
                100/vi/$v_i$/south,
                80/vi+1/$v_{i+1}$/south,
                60/vi+2/$v_{i+2}$/south,
                120/vi-1/$v_{i-1}$/south,
                280/vj/$v_j$/north,
                300/vj-1/$v_{j-1}$/north,
                260/vj+1/$v_{j+1}$/north,
                240/vj+2/$v_{j+2}$/north} {
                    \node[black, fill, circle, inner sep=3pt] (\name) at (\i:3cm) {};
                    \node[black, anchor=\anchor] at (\i:3.3cm) {\label};
                }
                
                \draw[blue, thick] (vi) -- (vj);
                \draw[blue, thick] (vi+1) -- (vj+1);

      \draw[red, thick,
      decorate,
      decoration={snake, amplitude=0.8mm, segment length=3mm}]
      (vi+1) -- (vi+2);
      \draw[red, thick,
      decorate,
      decoration={snake, amplitude=0.8mm, segment length=3mm}]
      (vj+1) -- (vj+2);
      \draw[red, thick,
      decorate,
      decoration={snake, amplitude=0.8mm, segment length=3mm}]
      (vi-1) -- (vi);
      \draw[red, thick,
      decorate,
      decoration={snake, amplitude=0.8mm, segment length=3mm}]
      (vj) -- (vj-1);

            \end{tikzpicture}
        }
        \caption{}
    \end{subfigure}
    \hfill
    \begin{subfigure}{0.49\textwidth}
        \centering
        \scalebox{0.7}{
            \begin{tikzpicture}
                \draw[black, thick] (0,0) circle (3cm);
                
                \foreach \i/\name/\label/\anchor in {
                100/vi/$v_i$/south,
                80/vi+1/$v_{i+1}$/south,
                120/vi-1/$v_{i-1}$/south,
                280/vj/$v_j$/north,
                260/vj+1/$v_{j+1}$/north,
                240/vj+2/$v_{j+2}$/north,
                160/pj+2/$p_{j+2}$/east,
                200/pi-1/$p_{i-1}$/east} {
                    \node[black, fill, circle, inner sep=3pt] (\name) at (\i:3cm) {};
                    \node[black, anchor=\anchor] at (\i:3.3cm) {\label};
                }
                
                \draw[blue, thick] (vi) -- (vj);
                \draw[blue, thick] (vi+1) -- (vj+1);
                \draw[black, thick] (vi-1) -- (pi-1);
                \draw[black, thick] (vj+2) -- (pj+2);

      \draw[red, thick,
      decorate,
      decoration={snake, amplitude=0.8mm, segment length=3mm}]
      (vj+1) -- (vj+2);
      \draw[red, thick,
      decorate,
      decoration={snake, amplitude=0.8mm, segment length=3mm}]
      (vi-1) -- (vi);

            \end{tikzpicture}
        }
        \caption{}
    \end{subfigure}
    \caption{Situations in the proof of Theorem \ref{th:cubicn2-2} when a tie~$(v_iv_j,v_{i+1}v_{j+1})$ 
    exist in $G$. In~$(a)$,~$|C[i+1,j]|\geq 3$; and in~$(b)$,~$|C[i+1,i]|\leq 2$.}
    \label{fig:cubic-cases12}
\end{figure}
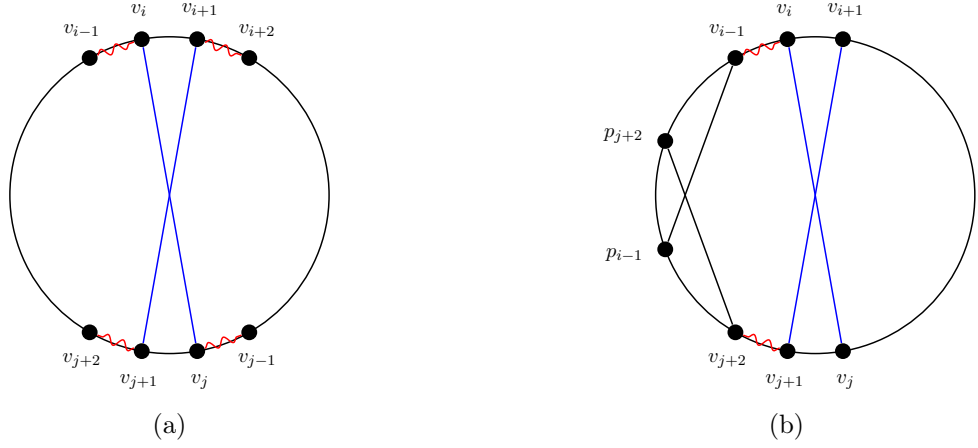


    Hence, from now on, we may assume that at least one of~$p_{i-1}$ or~$p_{j+2}$ is in~$C[i+1,j]$, 
    which implies that~$|C[i+1,j]|=2$. Let us suppose without loss of generality that~$p_{i-1}=v_{i+2}$.
    Let $$M'=(M \otimes p_{i-2}v_{i-2}v_{i-1}v_{i+2}) \otimes p_{j+2}v_{j+2}v_{j+1}v_{i+1}.$$
    Note that the vertices~$\{v_{i-1},v_{i},v_{i+2},v_j,v_{j+1}\}$
    are separated from~$v_{j+2}$ in~$G-M'$ (Figure \ref{fig:cubic-cases34}~$(a)$).
    Thus,~$|M'|\geq |M|-2$ and the proof follows.
\begin{figure}[H]
    \centering
    \begin{subfigure}{0.49\textwidth}
        \centering
        \scalebox{0.7}{
            \begin{tikzpicture}
                \draw[black, thick] (0,0) circle (3cm);
                
                \foreach \i/\name/\label/\anchor in {
                100/vi/{$\Large v_i$}/south,
                80/vi+1/{$\Large v_{i+1}$}/south,
                0/vi+2/{$\Large v_{i+2}$}/west,
                120/vi-1/{$\Large v_{i-1}$}/south,
                140/vi-2/{$\Large v_{i-2}$}/south,
                280/vj/{$\Large v_j$}/north,
                260/vj+1/{$\Large v_{j+1}$}/north,
                240/vj+2/{$\Large v_{j+2}$}/north,
                220/pi-2/{$\Large p_{i-2}$}/north,
                160/pj+2/{$\Large p_{j+2}$}/east} {
                    \node[black, fill, circle, inner sep=3pt] (\name) at (\i:3cm) {};
                    \node[black, anchor=\anchor] at (\i:3.3cm) {\label};
                }
                
                \draw[blue, thick] (vi) -- (vj);
                \draw[blue, thick] (vi+1) -- (vj+1);
                \draw[black, thick] (vi-1) -- (vi+2);
                \draw[black, thick] (vj+2) -- (pj+2);
                \draw[black, thick] (vi-2) -- (pi-2);

                 \draw[red, thick,
      decorate,
      decoration={snake, amplitude=0.8mm, segment length=3mm}]
      (vj+1) -- (vj+2);
      \draw[red, thick,     decorate,  decoration={snake, amplitude=0.8mm, segment length=3mm}]      (vi-1) -- (vi-2);
      
            \end{tikzpicture}
        }
        \caption{}
    \end{subfigure}
    \hfill
    \begin{subfigure}{0.49\textwidth}
        \centering
        \scalebox{0.7}{
            \begin{tikzpicture}
                \draw[black, thick] (0,0) circle (3cm);
                
                \foreach \i/\name/\label/\anchor in {
                100/x/{$\Large v_x$}/south,        
                60/vb/{$\Large v_{b}$}/south,
                15/px/{$\Large p_{x}$}/west,
                120/va/{$\Large v_{a}$}/south,
                -15/py/{$\Large p_y$}/north,
                300/pb/{$\Large p_{b}$}/north,
                260/y/{$\Large v_y$}/north,
                240/pa/{$\Large p_{a}$}/north} {
                    \node[black, fill, circle, inner sep=3pt] (\name) at (\i:3cm) {};
                    \node[black, anchor=\anchor] at (\i:3.3cm) {\label};
                }
                
                \draw[blue, thick] (va) -- (pa);
                \draw[blue, thick] (vb) -- (pb);
                \draw[black, thick] (x) -- (px);
                \draw[black, thick] (y) -- (py);

                 \draw[red, thick,
      decorate,
      decoration={snake, amplitude=0.8mm, segment length=3mm}]
      (va) -- (x);
      \draw[red, thick,
      decorate,
      decoration={snake, amplitude=0.8mm, segment length=3mm}]
      (pa) -- (y);
      
            \end{tikzpicture}
        }
        \caption{}
    \end{subfigure}
    \caption{Situations in the proof of Theorem \ref{th:cubicn2-2}. In~$(a)$, a tie~$(v_iv_j,v_{i+1}v_{j+1})$ 
    exists, $|C[i+1,j]|\leq 2$ and~$p_{i-1}=v_{i+2}$; and in~$(b)$, there exist no ties.}
    \label{fig:cubic-cases34}
\end{figure}
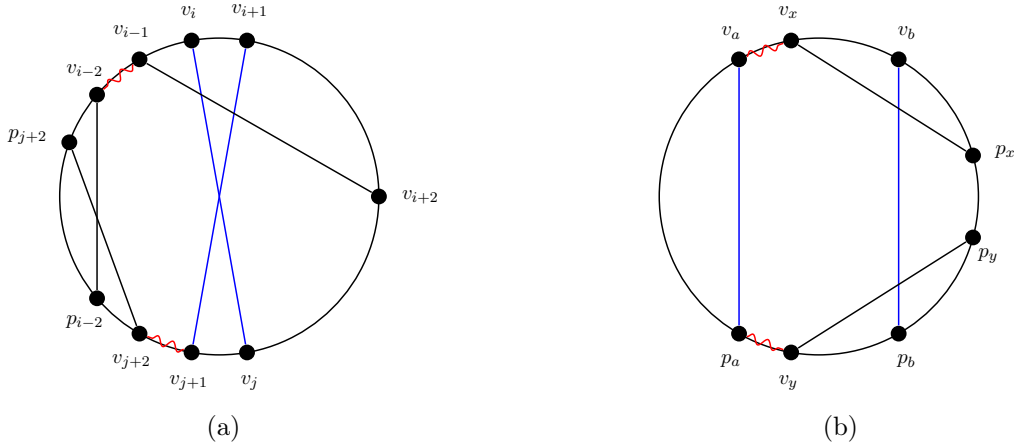

\noindent \textbf{Case 2:} $G$ contains no ties. \\

We first show that $G$ contains two parallel chords. Suppose, for a contradiction, that this is not the case. 
Let $v_i v_{i+1}$ be an edge of $C$ such that the distance between $p_i$ and $p_{i+1}$ along $C$ is maximized. 
Since $G$ contains no ties, the path $C[p_i,p_{i+1}]$ contains an internal vertex, say $v_k$. 
It follows that the chord $v_k p_k$ is parallel to either $v_i p_i$ or $v_{i+1} p_{i+1}$, a contradiction.

Thus, there exist vertices $v_a, v_b$ with $a<b$ such that the chords $v_a p_a$ and $v_b p_b$ are parallel. 
Choose such a pair maximizing $|b-a| + |p_a - p_b|$. Let $v_x$ be the neighbour of $v_a$ in $C[v_a,v_b]$, 
and let $v_y$ be the neighbour of $p_a$ in $C[p_b,p_a]$ (possibly $v_x = v_b$ or $v_y = p_b$). 
Since $G$ contains no ties, both $p_x$ and $p_y$ lie in $V(C[x,y])$; otherwise, we obtain a pair of parallel chords 
contradicting the choice of $v_a$ and $v_b$.

Applying Claim~\ref{clm:reducing-path}, we obtain
\[
M' = M \otimes \langle p_x, v_x, v_a, p_a, v_y, p_y \rangle,
\]
where $|M'| = |M| - 1$. 
Moreover, $v_a$ and $v_b$ lie in distinct components of $G - M'$ 
(see Figure~\ref{fig:cubic-cases34}(b)). 
Therefore, the proof follows.
\end{proof}

\begin{corollary}
There exists a polynomial-time algorithm that, given a decomposable cubic graph $G$, finds a separating matching of size at least $\mathrm{mms}(G)-2$.
\end{corollary}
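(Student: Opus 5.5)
The plan is to turn the existential argument of Theorem~\ref{th:cubic-bridge} and Theorem~\ref{th:cubicn2-2} into an algorithm. The target bound is then met because $\mathrm{mms}(G)\le \nu(G)$, and $\nu(G)\le n/2$ always holds. So it suffices to output, in polynomial time, a separating matching of size at least $\nu(G)-2$; in the $2$-edge-connected case, size at least $n/2-2$ is enough.

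First, I would handle the input structurally. Compute the connected components. If $G$ is disconnected, the empty matching already separates $G$ and is trivially within the bound, since deleting no edges leaves several components. A better choice is a maximum matching, whose removal keeps at least as many components, so $G-M$ is still disconnected and $M$ is separating; this is exactly optimal. So assume $G$ is connected.

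Next, find bridges in linear time (for example with Tarjan's algorithm) and split into two cases.
\begin{itemize}
\item If there is a bridge, compute a maximum matching $M$ via Edmonds' blossom algorithm. Then perform the local exchange from the proof of Theorem~\ref{th:cubic-bridge}: if the bridge $e=uv$ is not in $M$, delete the at most two edges of $M$ at $u$ and $v$ and add $e$. This yields a separating matching of size at least $\nu(G)-1\ge \mathrm{mms}(G)-1$.
\item If $G$ is $2$-edge-connected, compute a perfect matching $M$, which exists by Petersen's Theorem, again with Edmonds' algorithm. Check in linear time whether $G-M$ is disconnected; if so, output $M$, which is optimal. Otherwise $G-M$ is a Hamiltonian cycle $C$, which we read off directly.
\end{itemize}

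In the Hamiltonian-cycle case, we follow the case analysis of Theorem~\ref{th:cubicn2-2} constructively. Decomposability excludes $K_4$ and $K_{3,3}$, which we can detect directly anyway.
\begin{itemize}
\item Search all pairs of chords for a tie in $O(n^2)$ time. If one exists, evaluate the subcases of Case~1 by computing the lengths $|C[i+1,j]|$ and $|C[j+1,i]|$ and the positions of $p_{i-1}$ and $p_{j+2}$. Then apply the two prescribed $M$-reducing paths via the operation $\otimes$.
\item If there is no tie, enumerate all pairs of parallel chords in $O(n^2)$ and choose the pair maximizing $|b-a|+|p_a-p_b|$. Then form the single reducing path $\langle p_x,v_x,v_a,p_a,v_y,p_y\rangle$.
\end{itemize}
Each operation takes constant or linear time. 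The resulting $M'$ has size $n/2-1$ or $n/2-2$, and hence at least $\mathrm{mms}(G)-2$.

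To be safe, I would verify the output explicitly: check that $M'$ is a matching and that $G-M'$ is disconnected. This is a linear-time check, and it guards against any case the constructive reading might mishandle.

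The main obstacle is not the running time but making sure every step of the proof of Theorem~\ref{th:cubicn2-2} is effective. Each choice there is either an explicit local configuration or an extremal choice over $O(n^2)$ candidates. The one place I would examine carefully is the argument that parallel chords exist when no tie is present, but that is only an existence claim used to justify that the enumeration finds a pair. Overall, the procedure runs in the time of Edmonds' algorithm plus $O(n^2)$.
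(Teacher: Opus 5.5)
Your proposal follows the paper's own proof. You compute a maximum matching, apply the exchange from Theorem~\ref{th:cubic-bridge} when $G$ has a bridge, and otherwise run the constructive argument of Theorem~\ref{th:cubicn2-2}, with $\mathrm{mms}(G)\le\nu(G)$ giving the bound. Your implementation details (bridge detection, Edmonds' algorithm, $O(n^2)$ searches) only spell out the ``polynomial number of local operations'' the paper invokes.

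Two small corrections.

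\emph{Disconnected inputs.} Your treatment of disconnected $G$ is wrong under the paper's definition: a separating matching must \emph{increase} the number of components. So for disconnected $G$, neither the empty matching nor an arbitrary maximum matching need qualify. Instead, run the connected procedure on one component not isomorphic to $K_4$ or $K_{3,3}$, and take maximum matchings in the other components. This still gives size at least $\nu(G)-2\ge\mathrm{mms}(G)-2$. The paper tacitly assumes $G$ is connected.

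\emph{The step that needs care.} The delicate step of Theorem~\ref{th:cubicn2-2} is not the existence of parallel chords. It is the Case~2 claim $p_x,p_y\in V(C[x,y])$. That claim is valid only if a tie means any pair of chords $v_iv_j$, $v_{i+1}v_{j+1}$, without requiring the four vertices to induce $C_4$.

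Under the induced reading, the algorithm fails. Take a ring of at least two diamonds in which $C$ traverses each diamond as $s,t,u,w$, where $t,u$ are the two vertices of degree $3$ within the diamond. Then $M$ consists of the chords $su$ and $tw$, and there is no tie. Yet for every parallel pair, one of $p_x,p_y$ is the interior vertex of the length-$2$ arc cut off by $v_ap_a$. Consequently $M'$ is not separating, and your final check fails with no fallback. Your tie search should therefore use the weaker condition.
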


\begin{proof}
Let $G$ be a decomposable cubic graph. Compute a maximum matching $M$ of $G$ in polynomial time.
If $G$ contains a bridge, then by Theorem~\ref{th:cubic-bridge}, there exists a separating matching of size at least $|M|-1$. 
Moreover, the proof of Theorem~\ref{th:cubic-bridge} is constructive and shows how to obtain such a separating matching from $M$ by modifying at most two edges. 
This procedure clearly runs in polynomial time.

Otherwise, $G$ is $2$-edge-connected. In this case, by Theorem~\ref{th:cubicn2-2}, there exists a separating matching of size at least $|M|-2$. 
Furthermore, the proof of Theorem~\ref{th:cubicn2-2} is constructive and yields such a separating matching using a polynomial number of local operations.
In both cases, we obtain a separating matching of size at least $\mathrm{mms}(G)-2$ in polynomial time.
\end{proof}

\section{Matching separators in bicubic graphs} \label{sec:bicubic}



We say that a graph $G$ is \textit{bicubic} if it is both bipartite and cubic. In what 
follows, we present a conjecture related to bicubic graphs that admit a perfect matching 
separator. For this, we need some important definitions.
The \textit{disjoint union} of graphs $G$ and $H$, denoted by $G \cup H$, is 
the graph $(V(G) \cup V(H), E(G) \cup E(H))$. Given two cubic graphs $G_1$ and $G_2$, 
and vertices $u \in V(G_1)$ and $v \in V(G_2)$, 
a \textit{star product}~$(G_1, u) * (G_2,v)$ is obtained from $G_1 \cup G_2$ 
by deleting both $u$ and $v$, and by joining the three neighbors of~$u$ in~$G_1$ with the three 
neighbors of~$v$ in~$G_2$ with a matching of size three arbitrarily, see Figure~\ref{fig:starprod}~(c) 
for an example of this operation. A \textit{2-factor} of a graph $G$ is a spanning subgraph $H$ of $G$ 
such that all of its vertices have degree $2$ in $H$. A graph is called \textit{2-factor Hamiltonian} 
if every $2$-factor in the graph induces a Hamiltonian cycle. 

Now, consider $\mathcal{F}$, the class of graphs defined as follows. A graph $G \in \mathcal{F}$ 
if and only if there is a sequence of graphs $G_0 ,G_1, \ldots, G_n$ such that 
\begin{itemize}
    \item $G_0 = H_0$ (see Figure~\ref{fig:starprod}~(a)) or $G_0 = K_{3,3}$,
    \item $G_n = G$,
    \item $G_{i+1}$ is obtained from $G_{i}$ by applying the star product with $H_0$ or $K_{3,3}$.
\end{itemize}
Funk~\cite{Funk2003} studied this class of graphs and conjectured that the 
following holds.

\begin{conjecture}[Funk, 2003] \label{conj:Funk}
    Let $G$ be a $2$-factor Hamiltonian $k$-regular bipartite graph. 
    Then, either $k = 2$ and $G$ is a circuit or $k = 3$ and $G \in \mathcal{F}$. 
\end{conjecture}

\begin{figure}
    \centering
    \includegraphics[width=\linewidth]{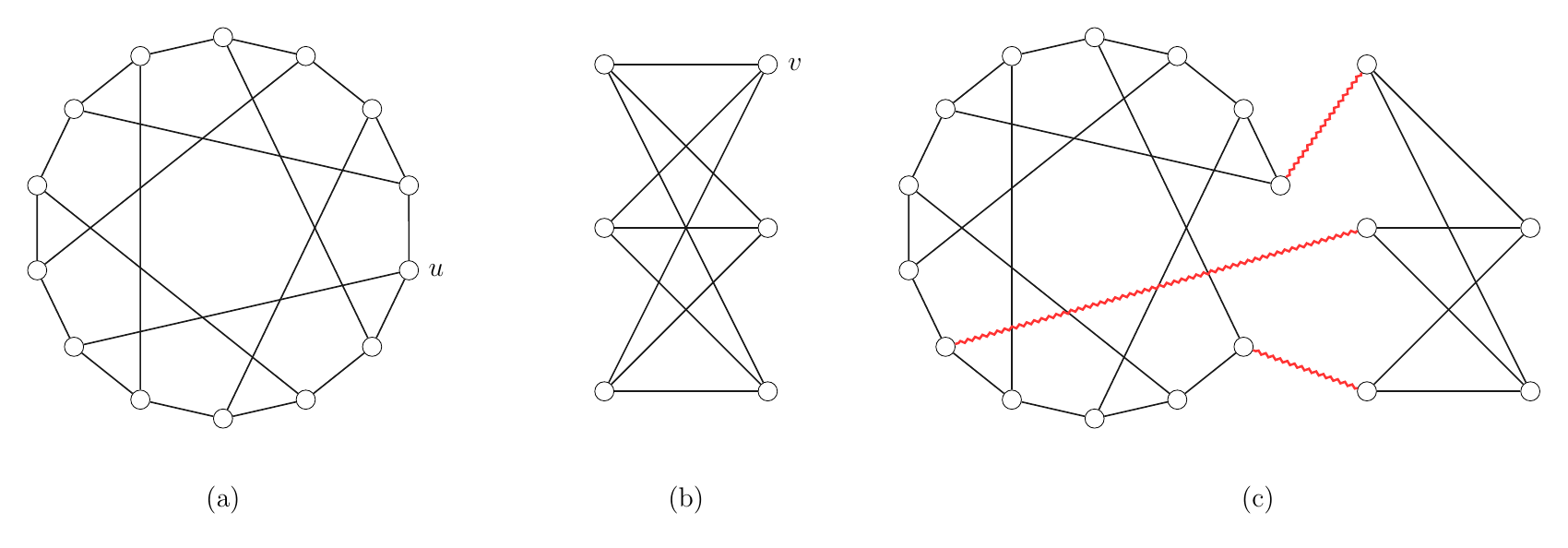}
    \caption{(a) the Headwood graph; (b) $K_{3,3}$; and (c) a star product between the Heawood graph and  $K_{3,3}$.}
    \label{fig:starprod}
\end{figure}

Observe that the complement of a $2$-factor in a cubic graph is a perfect matching.
So, if a cubic graph is $2$-factor Hamiltonian, no perfect matching separates the graph.
Conversely, if a cubic graph is not 2-factor Hamiltonian, there exists a perfect 
matching that separates the graph. Hence, if Conjecture~\ref{conj:Funk} is true, 
it implies that a bicubic graph $G \notin \mathcal{F}$ has a perfect matching separator. 
That is, a matching separator of size~$n/2$, where~$n$ is the number of vertices of $G$. 

Observe that the Heawood graph and $K_{3,3}$ are $2$-factor Hamiltonian, thus, the following result 
of Gorsky et al.~\cite{Gorsky2025} implies that that every graph in $\mathcal{F}$ is $2$-factor 
Hamiltonian. 

\begin{proposition}[Gorsky et al., 2025]
\label{prop:gorksy}
    Let~$G$ be a bicubic graph that can be represented as the star product
    $(G_1,u) * (G_2,v)$ of two bicubic graphs~$G_1$ and~$G_2$.
    Then~$G$ is 2-factor Hamiltonian if and only if~$G_1$ and~$G_2$ are 2-factor Hamiltonian.
\end{proposition}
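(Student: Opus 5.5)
The plan is to pass back and forth between $2$-factors of $G$ and pairs of $2$-factors of $G_1$ and $G_2$, by looking at how a $2$-factor meets the $3$-edge cut created by the star product. Write $N_{G_1}(u)=\{x_1,x_2,x_3\}$ and $N_{G_2}(v)=\{y_1,y_2,y_3\}$, and let the product join $x_iy_i$ for $i=1,2,3$. Let $S=V(G_1)\setminus\{u\}$ and $D=\{x_1y_1,x_2y_2,x_3y_3\}$, so that $D=\delta_G(S)$.

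\textbf{Step 1: parity count.} Let $(A_1,B_1)$ be the bipartition of $G_1$ with $u\in A_1$, so $x_1,x_2,x_3\in B_1$. Let $F$ be a $2$-factor of $G$ and let $k=|F\cap D|$. Each vertex of $A_1\setminus\{u\}$ has all its $F$-edges inside $S$, since all its neighbours lie in $B_1$. Counting the $F$-edges inside $S$ from both colour classes gives $2(|A_1|-1)=2|B_1|-k$. Since $G_1$ is regular bipartite, $|A_1|=|B_1|$, hence $k=2$. So every $2$-factor of $G$ uses exactly two of the three edges $x_iy_i$.

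\textbf{Step 2: correspondence.} Suppose $F\cap D=\{x_iy_i,x_jy_j\}$. Set $F_1=(F\cap E(G_1-u))\cup\{ux_i,ux_j\}$ and define $F_2$ symmetrically. Then $F_1$ and $F_2$ are $2$-factors of $G_1$ and $G_2$ that avoid the edges $ux_\ell$ and $vy_\ell$ for the same index $\ell$. Conversely, any such \emph{compatible} pair $(F_1,F_2)$ glues to a $2$-factor of $G$.

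For the cycle structure, every cycle of $F_1$ not through $u$ and every cycle of $F_2$ not through $v$ survives unchanged in $F$. The cycle of $F_1$ through $u$ and the cycle of $F_2$ through $v$ are spliced into a single cycle along $x_iy_i$ and $x_jy_j$. Hence $c(F)=c(F_1)+c(F_2)-1$. In particular, $F$ is a Hamilton cycle if and only if both $F_1$ and $F_2$ are.

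\textbf{Step 3: the two directions.} For ($\Leftarrow$), if $G_1$ and $G_2$ are $2$-factor Hamiltonian, then for every $2$-factor $F$ of $G$ the associated $F_1$ and $F_2$ are Hamilton cycles, so $F$ is one too.

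For ($\Rightarrow$), suppose $G_1$ has a non-Hamiltonian $2$-factor $F_1$ with $ux_\ell\notin F_1$. We need a $2$-factor $F_2$ of $G_2$ avoiding $vy_\ell$, which is the same as a perfect matching of $G_2$ containing $vy_\ell$. This exists because every edge of a regular bipartite graph lies in a perfect matching. To see this, delete the two endpoints of the edge. The remaining graph still satisfies Hall's condition: any set $X$ of $r$ vertices on one side sends $3r$ edges to the other side, at most $r$ of which can go to the deleted vertex. Thus $N(X)$ receives at least $2r$ edges and has size at least $2r/3$, which is not enough by itself. So instead I will use the standard fact that regular bipartite graphs are elementary, i.e.\ $1$-extendable when connected, or equivalently that the edges of a $3$-regular bipartite graph decompose into $3$ perfect matchings by K\"onig's edge-colouring theorem, so every edge lies in one of them. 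The edge-colouring route is the clean choice.

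Gluing $F_1$ with this $F_2$ gives a $2$-factor $F$ of $G$ with $c(F)\ge c(F_1)\ge 2$, so $G$ is not $2$-factor Hamiltonian. The case where $G_2$ has a non-Hamiltonian $2$-factor is symmetric.

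The main obstacle is Step 1. Without the bipartite parity argument, a $2$-factor of $G$ could a priori meet the cut in $0$ edges or in all $3$, and the correspondence with $2$-factors of $G_1$ and $G_2$ would break down. The existence of a compatible partner in Step 3 is routine once K\"onig's theorem is invoked.
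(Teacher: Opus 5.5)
Your proof is correct and complete. The counting in Step~1 is right: vertices of $A_1\setminus\{u\}$ have all their $F$-edges inside $S$, and $|A_1|=|B_1|$ forces $k=2$. Splicing the two cycles through $u$ and $v$ along $x_iy_i$ and $x_jy_j$ gives $c(F)=c(F_1)+c(F_2)-1$. K\"onig's decomposition of $G_2$ into three perfect matchings supplies the compatible partner $F_2$ in the ($\Rightarrow$) direction. The abandoned Hall computation in Step~3 should simply be deleted.

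The paper does not prove this proposition at all. It quotes the result from Gorsky et al.\ and uses it only as a black box, to conclude that every graph in $\mathcal{F}$ is 2-factor Hamiltonian. So there is no in-paper proof to compare against.

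The closest argument in the paper is in the proof of Lemma~\ref{lemma:star}. There the 3-edge cut $\{ax,by,cz\}$ is shown to meet each cycle $H_{i,j}$ in exactly two edges. That argument combines the fact that a cut meets every cycle in an even number of edges with the assumption that $H_{i,j}$ is Hamiltonian, so it must cross the cut.

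Your Step~1 is the stronger statement this proposition actually needs: it holds for \emph{every} 2-factor of $G$, Hamiltonian or not. Parity alone would only give $k\in\{0,2\}$. What excludes $k=0$ is the imbalance of the colour classes of $G_1-u$, which have sizes $|A_1|-1$ and $|B_1|$; you correctly identified this as the crux. Without it, a disconnected 2-factor of $G$ avoiding the cut entirely could not be ruled out, and the ($\Leftarrow$) direction would fail.
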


Although, every graph in $\mathcal{F}$ do not have a perfect matching separator, we will show 
that they have, unless for four graphs, a matching separator with size exactly one less than a 
perfect matching. An \textit{almost 2-factor} in a bicubic graph~$G$ is a disconnected spanning 
subgraph with exactly two vertices of degree $3$, and the rest of vertices of degree $2$. 
Note that $G$ has an almost $2$-factor $H$ if and only if~$G-E(H)$ induces a matching separator 
of size~$|V(G)|/2 -1$ in $G$. The following result tells us that the star product operation 
preserves the existence of an almost $2$-factor.

    \begin{figure}[H]
        \centering
        \includegraphics[width=0.55\linewidth]{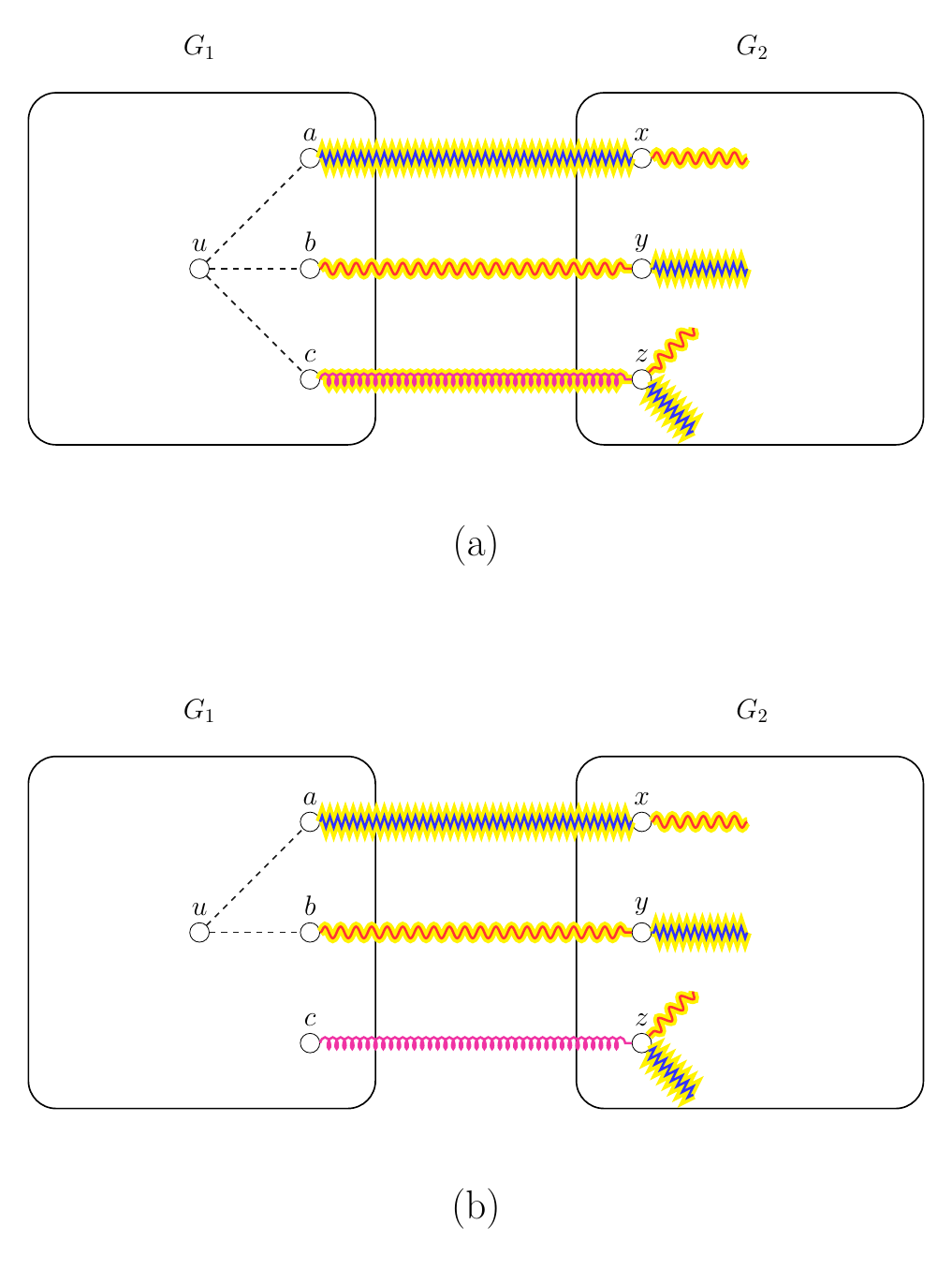}
        \caption{(a) Vertex $u$ has degree $3$ in $F_1$; and (b)~vertex $u$ has degree $2$ in $F_1$. 
        We depict by dashed lines the edges incident to $u$ in $F_1$. Moreover, highlighted lines 
        indicate edges that belong to $F$. Finally, we depict by different kind of lines the colors 
        in $\chi$.}
        \label{fig:almost2factor}
    \end{figure}

\begin{lemma}\label{lemma:star}
Let $G_1$ and $G_2$ be bicubic graphs such that $G_1$ contains an almost $2$-factor, 
and let $u \in V(G_1)$ and $v \in V(G_2)$. If $G = (G_1,u) * (G_2,v)$, then $G$ 
has an almost $2$-factor.
\end{lemma}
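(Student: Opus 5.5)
The plan is to glue an almost $2$-factor $F_1$ of $G_1$ to a suitable $2$-factor of $G_2$ across the three new edges of the star product. Fix an almost $2$-factor $F_1$ of $G_1$. Let $a,b,c$ be the neighbours of $u$ in $G_1$ and $a',b',c'$ the neighbours of $v$ in $G_2$. Label them so that the star product adds the edges $aa'$, $bb'$, $cc'$.

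The key tool on the $G_2$ side is a proper $3$-edge-colouring $\chi$ of $G_2$. It exists by König's edge-colouring theorem, since $G_2$ is bipartite and cubic. The three edges at $v$ receive three distinct colours. The union of any two colour classes is a $2$-factor of $G_2$. Hence for any two of the edges $va',vb',vc'$ there is a $2$-factor $Q$ of $G_2$ containing exactly those two edges at $v$. Only this property of $\chi$ is used.

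\textbf{Case (a): $u$ has degree $2$ in $F_1$.} Relabel so that $ua,ub\in E(F_1)$ and $uc\notin E(F_1)$. Choose the $2$-factor $Q$ of $G_2$ containing $va'$ and $vb'$, so $vc'\notin E(Q)$. Set
\[
F=(F_1-u)\cup(Q-v)\cup\{aa',bb'\}.
\]
Degrees are preserved. The vertices $a$ and $b$ lose the edges $ua,ub$ and gain $aa',bb'$, and $a',b'$ are treated symmetrically. The vertex $c$ loses nothing in $F$, and $c'$ keeps degree $2$ in $Q-v$. So every vertex of $G$ has the same degree in $F$ as in $F_1$ or in $Q$. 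Consequently $F$ is spanning and its degree-$3$ vertices are exactly the two degree-$3$ vertices of $F_1$; neither of these is $u$, since $u$ has degree $2$ in $F_1$.

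\textbf{Case (b): $u$ has degree $3$ in $F_1$.} Here $u$ is one of the two degree-$3$ vertices of $F_1$. Take $Q$ containing $va'$ and $vb'$ as before, and set $Q^+=Q+vc'$. In $Q^+$ the vertex $v$ has degree $3$ and so does $c'$. Put
\[
F=(F_1-u)\cup(Q^+-v)\cup\{aa',bb',cc'\}.
\]
Now $a,b,c$ and $a',b'$ keep their degrees. The vertex $c'$ has degree $3$, coming from its two $Q$-edges and $cc'$. The other degree-$3$ vertex of $F_1$ keeps degree $3$. So $F$ has exactly two degree-$3$ vertices and all other vertices have degree $2$.

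The main point to check carefully is that $F$ is disconnected, since gluing could merge everything into one component. The remedy uses the disconnectedness of $F_1$. Since $F_1$ is disconnected, it has a component $K$ not containing $u$. The subgraph $K$ is untouched in the construction: none of its vertices is $u$, and all edges of $F$ incident to $V(K)$ are edges of $F_1$ inside $K$. The new edges only touch $a,b,c$, which lie in the component of $u$. Hence $K$ is still a component of $F$, and the rest of $G$ is nonempty, so $F$ is disconnected. Together with the degree counts in the two cases, $F$ is an almost $2$-factor of $G$.
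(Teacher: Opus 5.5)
Your proof is correct and follows the paper's construction: the same case split on $d_{F_1}(u)$, the same gluing of $F_1-u$ to the union of two colour classes on the $G_2$ side, and $c'$ becoming the second degree-$3$ vertex when $d_{F_1}(u)=3$. The one real difference is that you $3$-edge-colour $G_2$ rather than $G$, so the three edges at $v$ get distinct colours automatically. The paper instead colours $G$ and must show that $ax,by,cz$ get distinct colours, which it justifies by appealing to $2$-factor Hamiltonicity of $G$; that is not a hypothesis of the lemma, though a parity argument on the odd set $V(G_1)\setminus\{u\}$ would suffice. Your explicit argument that a component of $F_1$ avoiding $u$ survives in $F$ also fills in the paper's one-line disconnectedness claim. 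One small slip there: in case (a) only $a,b$ receive new edges, and $c$ need not lie in the component of $u$, so the phrase "$a,b,c$, which lie in the component of $u$" should be restricted to the vertices actually touched.
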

\begin{proof}
    Let~$N_{G_1}(u)=\{a,b,c\}$ and~$N_{G_2}(v)=\{x,y,z\}$. Moreover, without loss of generality, 
    suppose that~$ax, by, cz \in E(G)$. In particular, consider the set of edges $M= \{ax, by, cz\}$ 
    as an edge cut that separates $G_1 - u$ from $G_2 - v$ in $G$.
    
    Since $G$ is bipartite and cubic, by K\"onig~\cite{Konig1916} theorem, $G$ has an edge-coloring with~$3$ 
    colors. Let~$\chi = \{X_1,X_2,X_3\}$ be such an edge-coloring of~$G$. In what follows, we show 
    that each edge in $M$ has a distinct color in $\chi$. First, as~$G$ is $2$-factor Hamiltonian, 
    every pair of colors in~$\chi$ induces a Hamiltonian cycle. Thus, for each pair of distinct 
    indices $i, j \in \{1,2,3\}$, we denote by $H_{i,j}$ the Hamiltonian cycle induced by colors $X_i$ 
    and $X_j$. Since any cut intersects every cycle in an even number of edges~\cite{Diestel18} (Theorem 1.9.4), 
    and $H_{i,j}$ is Hamiltonian, we have that $|M \cap H_{i,j}| = 2$. In other words, $M$ contains 
    exactly one edge of each color in $\chi$.

    Without loss of generality, assume that~$$ax \in X_1, by \in X_2, \text{ and } cz \in X_3.$$
    Let~$F_1$ be an almost $2$-factor of~$G_1$, which exists by hypothesis. We conclude the proof 
    by showing how to obtain an almost $2$-factor of $G$ from $F_1$. Let~$F_2$ be the set of edges in~$G_2$ 
    with colors~$1$ and~$2$, that is,~$F_2=(X_1 \cup X_2) \cap E(G_2)$. In what follows, we consider two cases 
    depending on the degree of~$u$ in~$F_1$.

    First, suppose that~$d_{F_1}(u) = 3$. We depict this case in Figure \ref{fig:almost2factor}~(a).
    Let~$$F=(F_1 \cup F_2)-ua-ub-uc+ax+by+cz.$$
    Observe that~$d_F(x)=d_F(y)=2$ and~$d_F(z)=3$. Moreover,~$d_F(p)=d_{F_1}(p)$ 
    for any vertex~$p \in \{a,b,c\}$. Thus, the number of vertices of degree $3$ in~$G[F]$ is exactly~$2$.
    Furthermore, as~$F_1$ is disconnected,~$F$ is also disconnected.
    Therefore, $F$ is an almost 
    $2$-factor of $G$. 
    
   Finally, suppose that $d_{F_1}(u)=2$. We depict this case in Figure \ref{fig:almost2factor}~(b).
   Without loss of generality, suppose that $ua, ub \in F_1$ (otherwise, we relabel the vertices accordingly). 
   Let $$F=(F_1 \cup F_2)-ua-ub+ax+by.$$ 
   Note that~$d_F(p)=d_{F_1}(p)$ for any~$p \in \{a,b,c\}$, and $d_F(p)=d_{F_2}(p)$ 
   for $p \in \{x,y,z\}$. Thus, the number of vertices of degree~$3$ in~$G[F]$ is 
   exactly~$2$. Also, as~$F_1$ is disconnected,~$F$ is disconnected. Therefore, the result 
   follows.
\end{proof}

In what follows, we show that every graph $G \in \mathcal{F}$, unless for four special graphs, 
contains a separating matching of size $|V(G)|/2 - 1$ and, thus, contains an almost $2$-factor. 
We show in Figure~\ref{fig:F} these four special graphs.

\begin{figure}[H]
    \centering
    \includegraphics[width=0.8\linewidth]{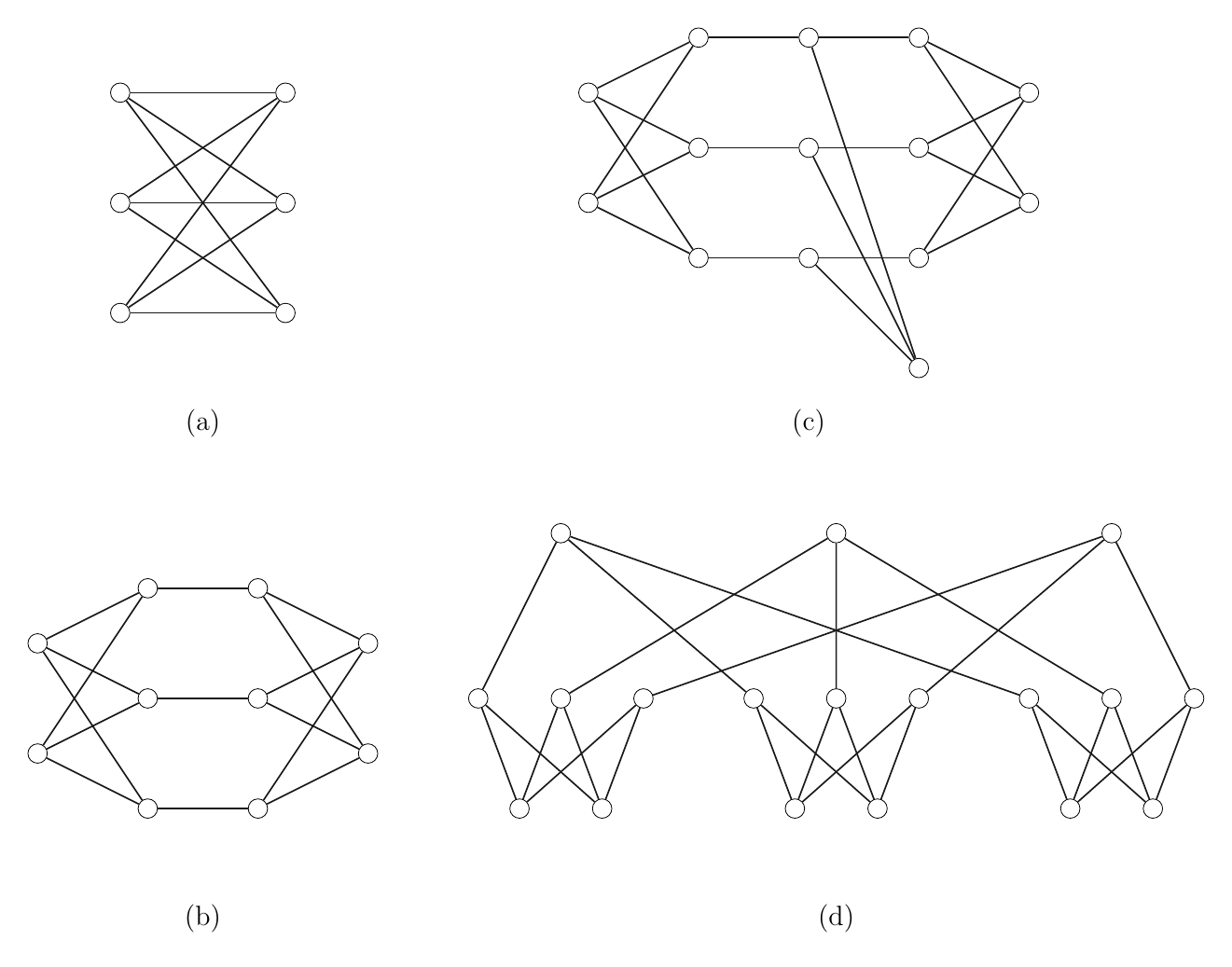}
    \caption{The graphs in $\mathcal{F}$ that do not contain an almost $2$-factor: 
    (a)~$\overline{F}_0$; (b)~$\overline{F}_1$; (c)~$\overline{F}_2$ ; and (d)~$\overline{F}_3$.}
    \label{fig:F}
\end{figure}


\bicubic*

\begin{proof}
    Let $G$ be a graph in $\mathcal{F}$. First, let us recall that every graph in $\mathcal{F}$ 
    is obtained by, starting from $H_0$ or $K_{3,3}$, and by repeatedly applying the star operation 
    with $H_0$ or $K_{3,3}$. Observe that the Headwood graph $H_0$ has a separating matching of 
    size $|V(H_0)|/2 - 1 = 6$ as shown by Figure~\ref{fig:starfam1}~(a). 
    Thus, by Lemma~\ref{lemma:star}, if $G = H_0$ or it can be obtained by applying the star product 
    with $H_0$ at some point, then $G$ contains an almost $2$-factor. Therefore, in what follows, 
    we focus on the graphs in $\mathcal{F}$ that are obtained by starting with $K_{3,3}$ and by 
    repeatedly applying (zero or more times) the star product with $K_{3,3}$.

    \begin{figure}
        \centering
        \includegraphics[width=0.8\linewidth]{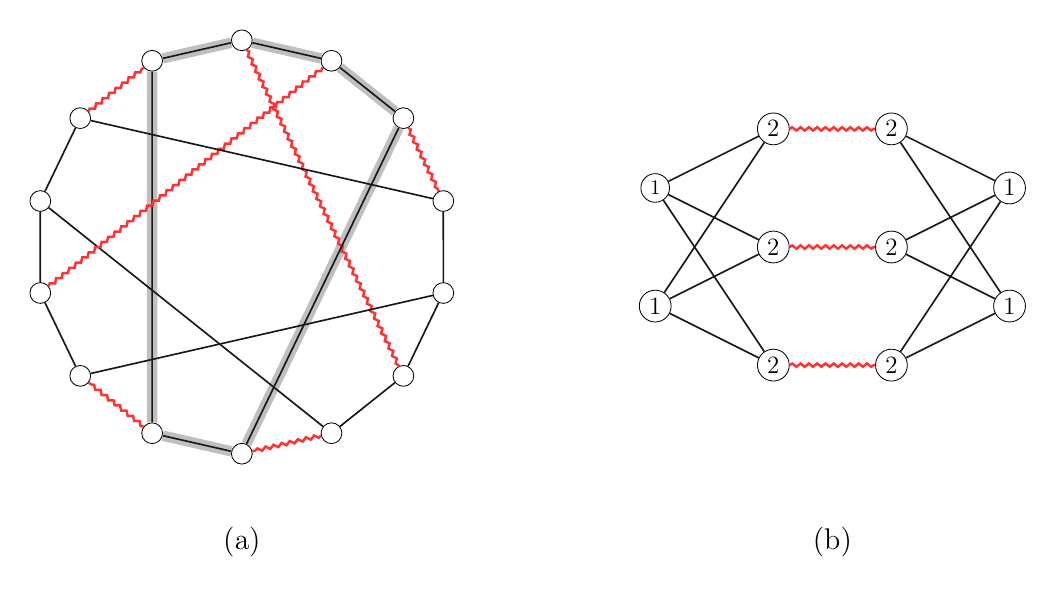}
        \caption{(a) A maximum separating matching of $H_0$; and (b)~a maximum separating 
        matching of $\overline{F}_1$. The curvy edges indicate the edges in the matching.}
        \label{fig:starfam1}
    \end{figure}

    By Theorem~\ref{thm:multi}, $K_{3,3}$ has no separating matching, so it has no 
    almost $2$-factor. By symmetry, there is a single graph (up to isomorphism) that 
    arises from the star product $(K_{3,3}, u) * (K_{3,3}, v)$. We denote this graph 
    by $\overline{F}_1$ (see Figure~\ref{fig:starfam1}~(b)). Now, we show that any separating 
    matching of $\overline{F}_1$ has size $3$. Note that $\overline{F}_1$ is the disjoint union 
    of two $K_{2,3}$ joined by a matching, say $M$, of size $3$. Let $A$ and $B$ be the subgraphs 
    of $\overline{F}_1$ isomorphic to $K_{2,3}$. Let $S$ be a separating matching of~$\overline{F}_1$. 
    By Theorem~\ref{thm:nondecomp}, $K_{2,3}$ is not decomposable ($K_{2,3} = \overline{D_2}$) and, 
    thus, $A \setminus S$ and $B \setminus S$ are connected subgraphs of $\overline{F}_1 \setminus M$. 
    This implies that $M \subseteq S$, otherwise, $F_1 \setminus S$ {\color{red} $\overline{F}_1$} would be connected. Thus  
    $|S| \geq 3$. On the other hand, as $M \subseteq S$, we have that~$E(A) \cap S = \emptyset$ 
    and $E(B) \cap S = \emptyset$. Therefore, $|S| = 3$. 
    
    Now, we focus on the star product $(\overline{F}_1, u) * (K_{3,3},v)$. By symmetry, this product 
    generates two different graphs (up to isomorphism) depending on the choice for $u$. We labeled
    with~$1$ and $2$ the vertices in Figure~\ref{fig:starfam1}~(b). Two vertices with the same label 
    indicate that $(\overline{F}_1, u) * (K_{3,3},v)$ results in the same graph. Let $\overline{F}_2$ 
    and $H_1$ be the two resulting graphs from $(\overline{F}_1, u) * (K_{3,3},v)$. We show in 
    Figure~\ref{fig:starfam2} these graphs. Observe that $H_1$ has a separating matching of size $|V(H_1)|/2 - 1$. 
    In what follows, we show that every separating matching of~$\overline{F}_2$ has size at 
    most $|V(\overline{F}_2)|/2 - 2 = 5$. 
    
    Let $A$ and $B$ be the subgraphs of $\overline{F}_2$ that are isomorphic to $K_{2,3}$. Moreover, 
    let $C = V(\overline{F}_2) \setminus (V(A) \cup V(B))$, and let $M_A$ (resp., $M_B$) be 
    the subset of edges in $\overline{F}_2$ with one end in $A$ (resp., $B$) and the other end in $C$. 
    Let $S$ be any separating matching of $\overline{F}_2$. We will show that 
    either $M_A \subseteq S$ or $M_B \subseteq S$. Note that, this implies that $|S| \leq 5$. 
    Consider that $C = \{a,b,c,d\}$ such that $a, b$ and $c$ are the ends of the edges in $M_A$ 
    that are not in $A$. By contradiction, suppose that $M_A \setminus S \neq \emptyset$ and 
    $M_B \setminus S \neq \emptyset$. We distinguish two cases depending on whether $d$ is matched 
    by $S$ to another vertex in $C$. First, without loss of generality, suppose that $d$ is matched 
    to $a$ by $S$. Then, the other to edges incident to $a$ belong to $\overline{F}_2 - S$. Since $A - S$ 
    and $B - S$ are connected, then $\overline{F}_2 - S$ is connected, a contradiction. Now, suppose that $d$ 
    is not matched by $S$. As $M_A \setminus S \neq \emptyset$ and $M_B \setminus S \neq \emptyset$, 
    let $e \in M_A \setminus S$ and $f \in M_B \setminus S$.
    using $e$,$f$, and an edge incident to $d$, we obtain a path between $A - S$ and $B - S$ in $G - S$ which implies that $S$ is not a separating 
    matching, a contradiction. Therefore, either $M_A \subseteq S$ or $M_B \subseteq S$, and $|S| \leq 5$. 

     \begin{figure}
         \centering
         \includegraphics[width=\linewidth]{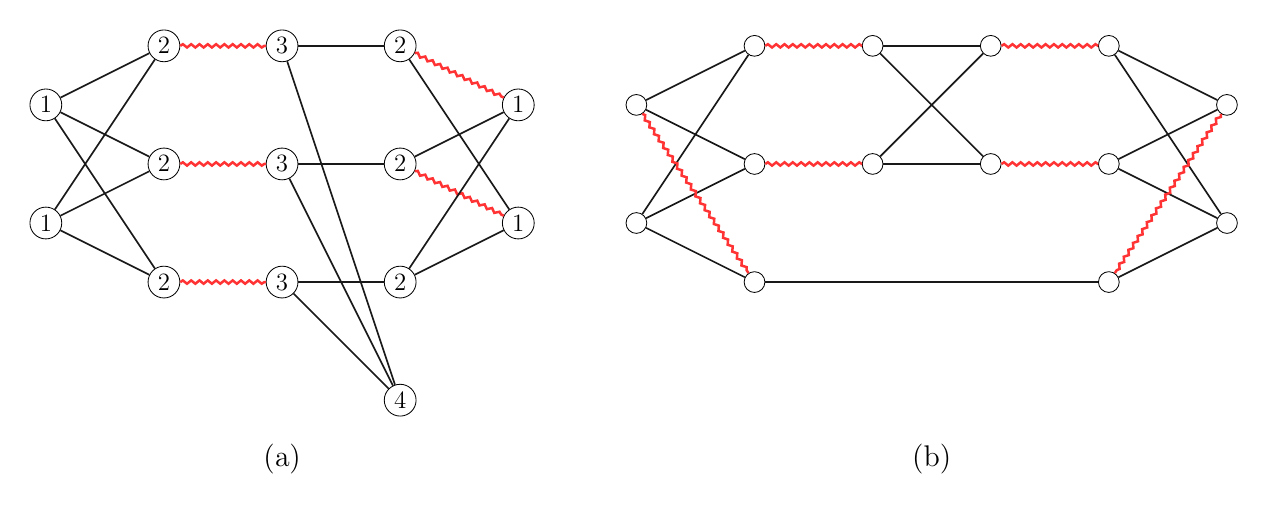}
         \caption{The graphs (a) $\overline{F}_2$ and; (b) $H_1$. The curvy 
         edges induce a maximum separating matching.}
         \label{fig:starfam2}
     \end{figure}

    Now, we consider the star product $(\overline{F}_2, u) * (K_{3,3},v)$. By symmetry, this product 
    generates four different graphs (up to isomorphism) depending on the choice for $u$. Analogously as in 
    the case of $\overline{F}_1$, we labeled with~$1$, $2$, $3$ and $4$ the vertices of $\overline{F}_2$ in Figure~\ref{fig:starfam2}~(a). Two vertices with the same label indicate that $(\overline{F}_2, u) * (K_{3,3},v)$ 
    results in the same graph. 
    \begin{figure}[h]
         \centering
         \includegraphics[width=\linewidth]{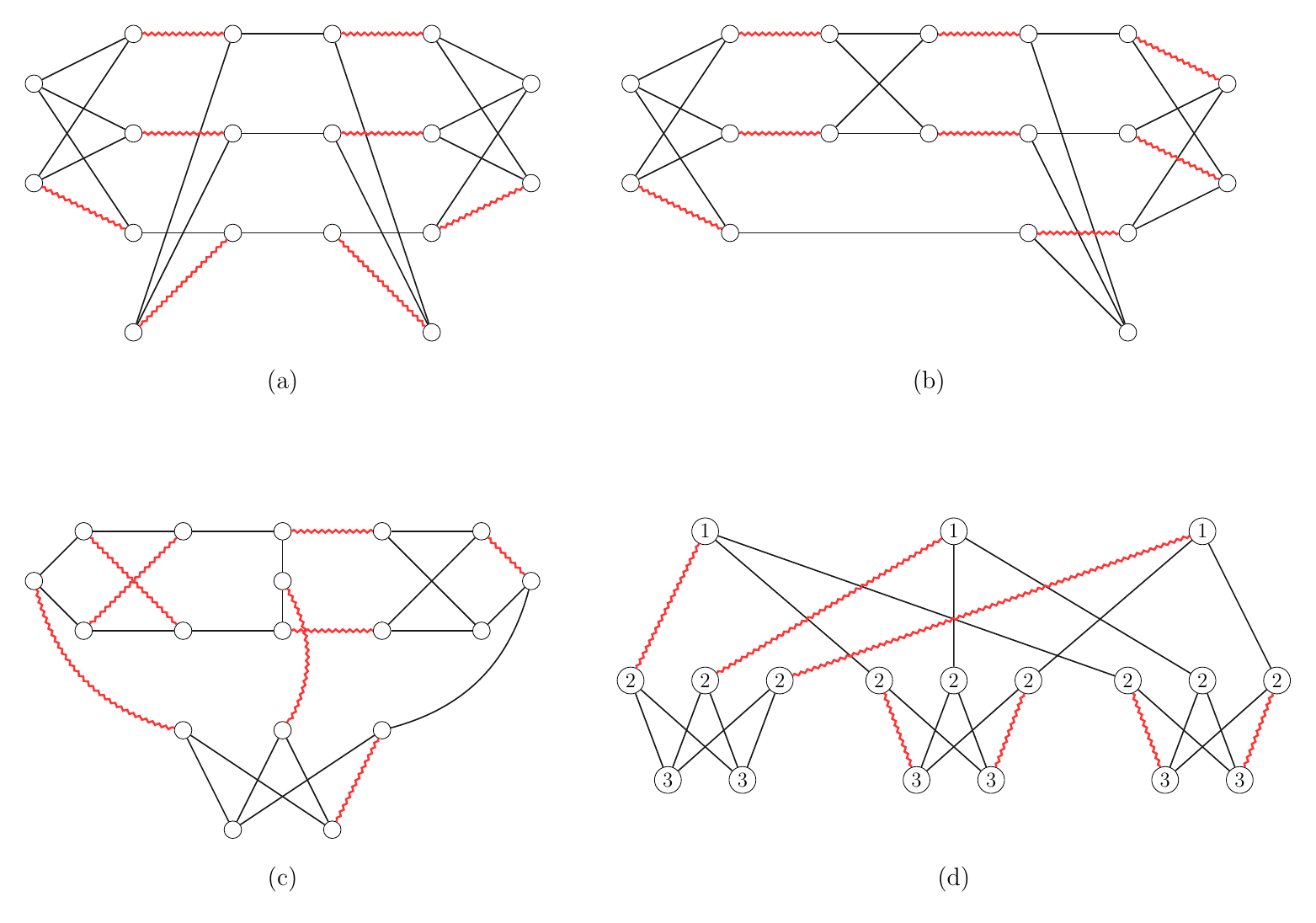}
         \caption{The graphs (a) $H_2$; (b) $H_3$; (c) $H_4$; and (d) $\overline{F}_3$. 
         The curvy edges induce a 
         separating matching.
         }
         \label{fig:starfam3}
     \end{figure}
    Let $H_2$, $H_3$, $H_4$ and $\overline{F}_3$ be the four resulting graphs 
    from $(\overline{F}_2, u) * (K_{3,3},v)$. We show in Figure~\ref{fig:starfam3} these 
    graphs. Note that $H_2$, $H_3$ and $H_4$ have a separating matching of size $|V(H_i)|/2 - 1$, 
    for $i = 2, 3, 4$. Now, we will show that any 
    separating matching of $\overline{F}_3$ has size at most $|V(\overline{F}_3)|/2 - 2 = 7$. 

    Let $A$, $B$ and $C$ be the subgraphs of $\overline{F}_3$ that are isomorphic to $K_{2,3}$. 
    Let $S$ be a separating matching of $\overline{F}_3$, and consider a connected component 
    of $\overline{F}_3 - S$ with the minimum number of vertices, say $K$. We will show 
    that either $V(K) = V(A)$, $V(K) = V(B)$ or~$V(K) = V(C)$. 
    First, by Theorem~\ref{thm:nondecomp}, $K_{2,3}$  is not decomposable. This implies that $V(A)$ 
    is contained in a connected component of $\overline{F}_3 - S$, and the same holds for $V(B)$ 
    and $V(C)$. On the other hand, let $x$, $y$ and $z$ be the vertices of $\overline{F}_3$ 
    not in $V(A) \cup V(B) \cup V(C)$.
    Since $\overline{F}_3$ is cubic, we have that $\overline{F}_3 - S$ has no isolated component
    Therefore, as  $\{x, y, z\}$ is an independent set, $V(A) \subseteq V(K)$, 
    $V(B) \subseteq V(K)$ or~$V(C) \subseteq V(K)$ 
    Without loss of generality, suppose   
    that $V(A) \subseteq V(K)$ (the other cases are symmetric). Furthermore, by contradiction, 
    suppose that $V(A) \neq V(K)$. This implies that $\{x, y, z\} \cap V(K) \neq \emptyset$ because  
    no vertex in $A$ is adjacent to a vertex in $B$ or $C$. Since $\overline{F}_3$ is cubic, 
    we have that $V(A)$ is connected in $\overline{F}_3 - S$ to $B$ or $C$ through a vertex in $\{x, y, z\}$. 
    Thus $K$ contains at least $11$ vertices, but this contradicts the minimally of $K$ as the 
    other connected components of $\overline{F}_3 - S$ have at most $7$ vertices.  Therefore, 
    $V(K) = V(A)$. Since $S$ disconnects $A$ in $\overline{F}_3$, we have that $S$ contains 
    the three edges with one end in $A$ and the other end in $\{x, y, z\}$. Thus, $|S \cap E(B)| \leq 2$ 
    and $|S \cap E(C)| \leq 2$, which implies that $|S| \leq 7$ 

    Finally, we consider the star product $(\overline{F}_3, u) * (K_{3,3},v)$. By symmetry, this product 
    generates three different graphs (up to isomorphism) depending on the choice for $u$. We labeled  
    with~$1$, $2$ and $3$ the vertices of $\overline{F}_3$ in Figure~\ref{fig:starfam2}~(d). Two vertices 
    with the same label indicate that $(\overline{F}_3, u) * (K_{3,3},v)$ results in the same graph. 
    Let $H_5$, $H_6$ and $H_7$ be the graphs that result from this product. In Figure~\ref{fig:starfam4}, 
    we depict a separating matching of size $|V(H_i)|/2 - 1$, for $i = 5, 6, 7 $. Therefore, by Lemma~\ref{lemma:star}, 
    the result follows.
    \begin{figure}[ht]  
        \centering
        \includegraphics[width=0.5\linewidth]{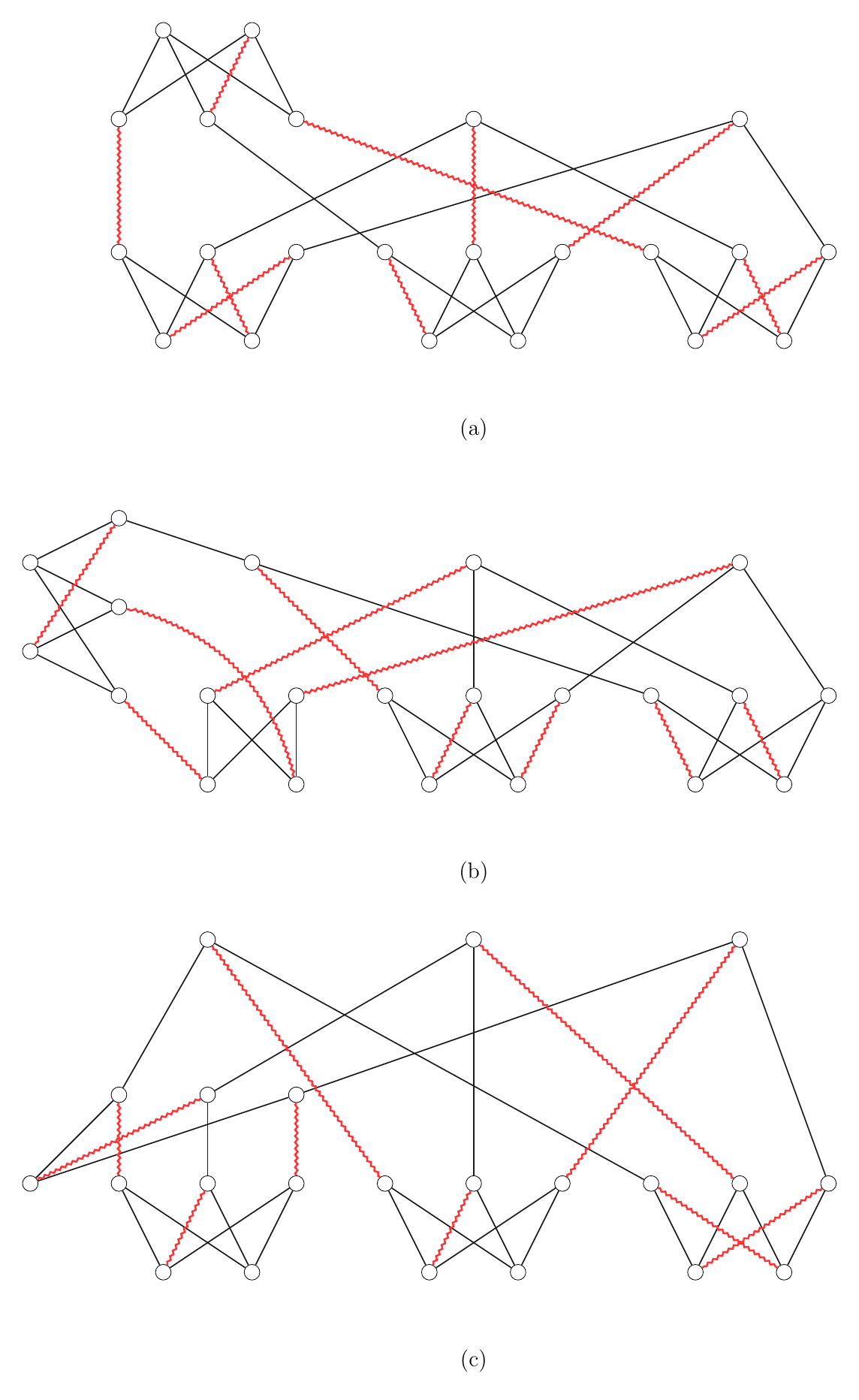}
        \caption{(a) $H_5$; (b) $H_6$; and (c) $H_7$. 
        The curvy edges induce a maximum separating matching.}
        \label{fig:starfam4}
    \end{figure}
\end{proof}

Observe that, if Conjecture~\ref{conj:Funk} is true, we can partition the 
set of bicubic graphs in two sets depending on whether the graph belongs 
to $\mathcal{F}$ or not. Moreover, for every graph $G$ not in $\mathcal{F}$, a 
maximum separating matching has size $|V(G)|/2$. On the other hand, if $G$ belongs 
to $\mathcal{F}$, the size of maximum separating matching is $|V(G)|/2 - 1$, unless 
for $\overline{F}_0$, $\overline{F}_1$, $\overline{F}_2$ and $\overline{F}_3$, in which case, 
the maximum separating matching has size $|V(G)|/2 - 2$. Since these four graphs 
have constant size, the following result follows.


\begin{corollary}
If Conjecture~\ref{conj:Funk} holds, then deciding, for a bicubic graph $G$ and an integer $k$, whether $\mathrm{mms}(G) \ge k$ has the same computational complexity as deciding membership in $\mathcal{F}$.
\end{corollary}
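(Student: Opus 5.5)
The corollary is a reduction between the two decision problems, and I would prove it in both directions, assuming Conjecture~\ref{conj:Funk}. The key preliminary is the trichotomy stated just before the corollary, which I would make explicit. For a bicubic graph $G$ on $n$ vertices:
\begin{itemize}
\item if $G\notin\mathcal{F}$, then $\mathrm{mms}(G)=n/2$;
\item if $G\in\mathcal{F}\setminus\{\overline{F}_0,\ldots,\overline{F}_3\}$, then $\mathrm{mms}(G)=n/2-1$;
\item if $G\in\{\overline{F}_0,\ldots,\overline{F}_3\}$, then $\mathrm{mms}(G)=n/2-2$.
\end{itemize}

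The first item follows from the conjecture. A graph outside $\mathcal{F}$ is not $2$-factor Hamiltonian, so some $2$-factor is disconnected, and its complementary perfect matching separates $G$.

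For the second item, the upper bound $\mathrm{mms}(G)\le n/2-1$ holds because every graph in $\mathcal{F}$ is $2$-factor Hamiltonian by Proposition~\ref{prop:gorksy}. Hence no perfect matching of $G$ separates it. The lower bound is Theorem~\ref{th:bicubicn2-1}.

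For the third item, the four exceptional graphs are computed in the proof of Theorem~\ref{th:bicubicn2-1} and in Theorem~\ref{thm:multi}. One subtlety needs care here. For $\overline{F}_0=K_{3,3}$, Theorem~\ref{thm:multi} shows there is no separating matching at all, so $\mathrm{mms}(K_{3,3})=0$, not $n/2-2=1$. I would record these four values individually rather than by the formula. Since the graphs have constant size, this changes nothing below.

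\textbf{From membership to $\mathrm{mms}$.} Given $(G,k)$ with $G$ bicubic, query membership of $G$ in $\mathcal{F}$. Check in constant time whether $G$ is isomorphic to one of the four exceptional graphs; this amounts to a size check followed by brute force when $n\le 22$. This determines $\mathrm{mms}(G)$ exactly, and we then compare it with $k$.

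\textbf{From $\mathrm{mms}$ to membership.} Given a bicubic $G$, first handle the four exceptional graphs by direct inspection. Otherwise, query whether $\mathrm{mms}(G)\ge n/2$. By the trichotomy, the answer is yes exactly when $G\notin\mathcal{F}$.

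Both reductions make one oracle call and otherwise use polynomial (indeed constant-overhead) work. So under any reasonable notion of ``same complexity'' (polynomial-time Turing equivalence, or many-one equivalence after the finite exceptions are handled), the two problems coincide.

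The main obstacle is the upper bound in the second item. It needs the fact that all of $\mathcal{F}$ is $2$-factor Hamiltonian, which requires Proposition~\ref{prop:gorksy} together with the base cases $H_0$ and $K_{3,3}$, applied inductively along the defining sequence. The other delicate point is correctly stating the degenerate value for $K_{3,3}$.
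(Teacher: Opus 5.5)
Your proposal is correct and is essentially the paper's argument made explicit. The paper also rests on the trichotomy $\mathrm{mms}(G)=n/2$ for bicubic $G\notin\mathcal{F}$ (via the conjecture), $\mathrm{mms}(G)=n/2-1$ for non-exceptional $G\in\mathcal{F}$ (Theorem~\ref{th:bicubicn2-1} plus 2-factor Hamiltonicity from Proposition~\ref{prop:gorksy}), and finitely many constant-size exceptions, with connectivity of $G$ tacitly assumed in both accounts. You are right that the paper's stated value $n/2-2$ is wrong for $\overline{F}_0=K_{3,3}$, where $\mathrm{mms}=0$, and that this does not affect the complexity statement; the nondecomposability of $K_{3,3}$ comes from Theorem~\ref{thm:nondecomp} rather than Theorem~\ref{thm:multi}, which only gives the other direction.
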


\section{Matching separators in claw-free cubic graphs} \label{sec:claw}

In this section, we show that every claw-free cubic graph has a perfect matching that separates the graph, or equivalently, a disconnecting perfect matching. We begin with a lemma valid for any cubic graph (see also Proposition~4.13 of~\cite{Bouquet2025}). 

\begin{lemma}\label{lemma:cubic-bridge}
Every cubic graph with a bridge and a perfect matching has a disconnecting perfect matching.
\end{lemma}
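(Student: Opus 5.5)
Let $G$ be a cubic graph with a bridge $e=uv$ and a perfect matching $M$. The plan is to show that every perfect matching of $G$ must contain $e$; since $e$ is a bridge, $G-M$ then has more components than $G$, so $M$ itself is a disconnecting perfect matching. No modification of $M$ is needed.

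The key step is a parity argument. Let $A$ and $B$ be the vertex sets of the two components of $G-e$, with $u\in A$ and $v\in B$. Counting degrees inside $A$ gives
\[
\sum_{w\in A} d_G(w) = 3|A| = 2|E(G[A])| + 1,
\]
because $e$ is the only edge of $G$ leaving $A$. So $3|A|$ is odd, and hence $|A|$ is odd; the same holds for $|B|$. Now take any perfect matching $M$. If $e\notin M$, then every vertex of $A$ is matched to another vertex of $A$, because $e$ is the only edge between $A$ and $B$. This is impossible when $|A|$ is odd. Therefore $e\in M$.

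To finish, note that $e$ is a bridge, so $e$ is a bridge of every subgraph that contains it. The graph $G-(M\setminus\{e\})$ still contains $e$, and removing $e$ from it increases the number of components. Hence $G-M$ has strictly more components than $G$, and $M$ is disconnecting.

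There is no real obstacle here. The only point to check is the parity computation, which relies on $G$ being cubic so that every vertex has odd degree. Connectivity of $G$ is not needed: if $G$ is disconnected, we apply the same argument to the component containing the bridge.
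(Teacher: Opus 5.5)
Your proof is correct and is essentially the paper's argument. Both use a handshake/parity count on the side $A$ of the bridge (one vertex of degree $2$ in $G-e$, all others of degree $3$) to force $|A|$ odd, so the bridge lies in every perfect matching and $M$ itself is disconnecting; your version is slightly cleaner, since it states this conclusion directly and avoids the paper's vacuous case of a vertex left unsaturated by a perfect matching.
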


\begin{proof}
Let $G$ be a cubic graph with a bridge $uv$ and a perfect matching $M$. We may assume that $G$ is connected, as otherwise we apply the argument to each component. If $uv \in M$, then $M$ is a separating matching, and we are done. Otherwise, suppose that one of $u$ and $v$ is not saturated by $M$, say $u$. Let $vx \in M$. Then $M' = (M \setminus \{vx\}) \cup \{uv\}$ is a perfect matching, and since $uv \in M'$, it is separating. Hence, we may assume that both $u$ and $v$ are saturated by $M$. Let $G_u$ be the component of $G - uv$ containing $u$. Since $M$ is perfect, $G_u$ has even order. However, $u$ has degree $2$ in $G_u$, while every other vertex has degree $3$, which is impossible.
\end{proof}

We will use the following characterization due to Oum~\cite{Oum2009}.

\begin{lemma}[Proposition~1 of~\cite{Oum2009}]\label{lemma:claw-free-characterization}
A graph $G$ is cubic, bridgeless, and claw-free if and only if one of the following holds:
\begin{itemize}
    \item $G \simeq K_4$,
    \item $G$ is a ring of diamonds, or
    \item there exists a cubic multigraph $H$ such that $G$ is obtained from $H$ by replacing each edge with a string of diamonds and each vertex with a triangle.
\end{itemize}
\end{lemma}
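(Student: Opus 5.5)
Since this is Oum's characterization, I would prove it by a direct local analysis of claw-free cubic graphs, in both directions.

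\emph{Backward direction.} This is routine. $K_4$ and a ring of diamonds are clearly cubic, claw-free and bridgeless. In the third construction, every vertex lies in a triangle: either a vertex-triangle or a diamond. Its unique neighbour outside that triangle or diamond is not adjacent to the other two, but those two are adjacent to each other, so no claw arises. Bridgelessness of $G$ should follow from that of $H$, because replacing an edge by a string of diamonds, or a vertex by a triangle, creates no new bridges. I expect the statement implicitly intends $H$ to be bridgeless, and I would check this point against Oum's precise formulation.

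\emph{Forward direction, local structure.} Let $G$ be cubic, claw-free and bridgeless, and take a vertex $v$. Since $N(v)$ is not independent, $v$ lies in a triangle, so each vertex lies in one, two or three triangles.
\begin{itemize}
\item If $v$ lies in three triangles, then $N(v)$ induces at least a path $P_3$. This forces a copy of $K_4$ containing $v$, and since $G$ is cubic, $G\simeq K_4$ (assuming $G$ is connected).
\item If $v$ lies in exactly two triangles $vab$ and $vac$ with $bc\notin E$, then $\{v,a,b,c\}$ induces a diamond. Here $v$ and $a$ are saturated, and the tips $b$ and $c$ each have exactly one external neighbour, $b'$ and $c'$ respectively.
\end{itemize}
I would then show that the tip edges $bb'$ and $cc'$ lie in no triangle. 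Next I would show that the diamonds, together with the triangles not contained in any diamond, partition $V(G)\setminus$ (nothing). In other words, every vertex lies either in exactly one diamond or in exactly one triangle that belongs to no diamond, and these pieces are vertex-disjoint.

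\emph{Forward direction, assembling the pieces.} Consider the external neighbour $b'$ of a tip $b$. It is either the tip of another diamond or a vertex of a lone triangle. Following tip-to-tip edges, I group the diamonds into maximal strings. If some string closes up on itself, connectivity forces $G$ to be a ring of diamonds. Otherwise each maximal string has both ends attached to lone triangles. Every vertex of a lone triangle has exactly one external edge, and that edge goes either to another lone triangle (a string of zero diamonds) or to the end of a string. Contracting each lone triangle to a vertex and each string, including its two attaching edges, to a single edge yields a cubic multigraph $H$, possibly with parallel edges or loops. By construction, $G$ is obtained from $H$ exactly as in the third item.

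\emph{Main obstacle.} I expect the hardest step to be the disjointness bookkeeping. Concretely, I must rule out a lone triangle sharing a vertex with a diamond, and rule out two diamonds overlapping. Both should follow from degree counting: a diamond's internal vertices are saturated, and its tips have only one free edge. I must also handle the degenerate small cases, such as two diamonds glued tip-to-tip into a short ring, or a string whose two ends attach to the same triangle. I would treat those degenerate cases explicitly before running the general contraction argument.
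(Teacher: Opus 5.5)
The paper does not prove this lemma; it quotes it from Oum's Proposition~1 and uses it as a black box. So there is no in-paper argument to compare with, and your proposal is a correct, self-contained reconstruction along the standard local lines:
\begin{itemize}
\item every vertex lies in a triangle;
\item three triangles at $v$ give $K_4$;
\item two triangles at $v$ give a diamond whose centres are saturated and whose tips have exactly one external edge each;
\item diamonds, and triangles lying in no diamond, are pairwise disjoint and cover $V(G)$. This really is just degree counting, and it also shows that the two tips of one diamond cannot share an external neighbour;
\item maximal strings either close into a ring or end at lone triangles, which you contract to get $H$.
\end{itemize}

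What doing it yourself buys over the citation is that it exposes hypotheses the paper's restatement glosses over.
\begin{itemize}
\item $G$ must be connected; otherwise $K_4\cup K_4$ is a counterexample.
\item Strings with zero diamonds must be allowed (``some edges'' in Oum's wording). Otherwise the triangular prism, which contains no diamond, is a counterexample. The paper's own later induction on the number of replaced edges already reads it this way.
\item $H$ must be bridgeless for the backward direction, as you suspected.
\end{itemize}
If you add bridgelessness of $H$ to the statement, you must also verify it in the forward direction. A bridge of $H$ lifts to a bridge of $G$. A loop of $H$, i.e.\ a string returning to the same triangle, forces the third external edge of that triangle to be a bridge.

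One slip to fix: three triangles at $v$ means $N(v)$ induces a triangle, not ``at least a $P_3$''. An induced $P_3$ is exactly the two-triangle case, which yields a diamond rather than $K_4$.
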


In this context, a \emph{diamond} is an induced subgraph isomorphic to $K_4 - e$. 
We say that two diamonds are \emph{adjacent} if there exists an edge joining a degree-$2$ vertex of one diamond to a degree-$2$ vertex of the other. 
Contracting each diamond to a single vertex and keeping these adjacencies yields an auxiliary graph; if this graph is a path, we call the corresponding subgraph a \emph{string of diamonds}, and if it is a cycle, we call it a \emph{ring of diamonds} (Figure~\ref{fig:claw-free-a}).

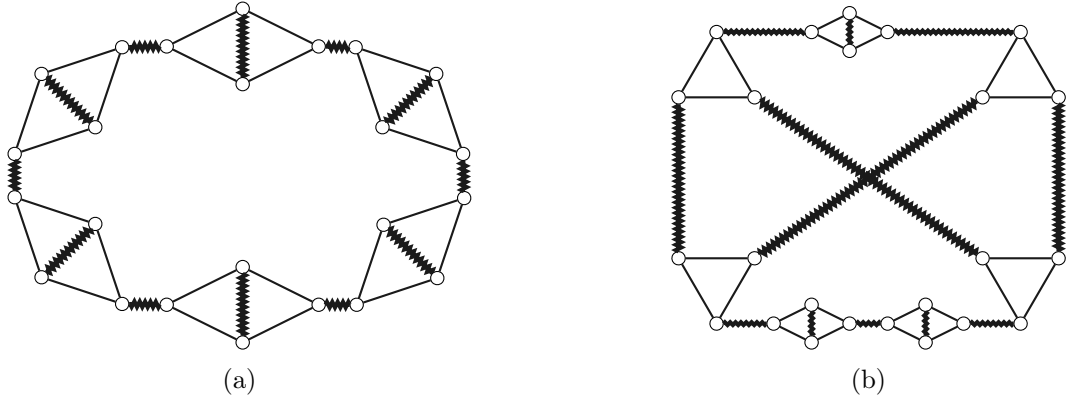
\begin{figure}[H]
\centering

\begin{subfigure}{0.48\textwidth}
\centering
\begin{tikzpicture}[scale=0.5, transform shape]
   
    \begin{scope}[yshift=1cm]
    \node[vertex] (A0) {};
    \node[vertex] (A1) at ($(A0)+(4,0)$) {};
    \node[vertex] (A2) at ($(A0)+(2,1)$) {};
    \node[vertex] (A3) at ($(A0)+(2,-1)$) {};
    \foreach \source/\dest in {A0/A2,A0/A3,A1/A2,A1/A3,A2/A3}
        \path[black edge] (\source) -- (\dest);
    \end{scope}

    \draw[black edge, ultra thick,
      decorate,
      decoration={zigzag, amplitude=1pt, segment length=2pt}]
      (A2) -- (A3);
        
    \begin{scope}[xshift=5cm,yshift=1cm, rotate=45]
    \node[vertex] (B0) {};
    \node[vertex] (B1) at ($(B0)+(4,0)$) {};
    \node[vertex] (B2) at ($(B0)+(2,1)$) {};
    \node[vertex] (B3) at ($(B0)+(2,-1)$) {};
    \foreach \source/\dest in {B0/B2,B0/B3,B1/B2,B1/B3,B2/B3}
        \path[black edge] (\source) -- (\dest);
    \end{scope}

     \draw[black edge, ultra thick,
      decorate,
      decoration={zigzag, amplitude=1pt, segment length=2pt}]
      (B2) -- (B3);

    \begin{scope}[xshift=7.8cm,yshift=5cm, rotate=135]
    \node[vertex] (C0) {};
    \node[vertex] (C1) at ($(C0)+(4,0)$) {};
    \node[vertex] (C2) at ($(C0)+(2,1)$) {};
    \node[vertex] (C3) at ($(C0)+(2,-1)$) {};
    \foreach \source/\dest in {C0/C2,C0/C3,C1/C2,C1/C3,C2/C3}
        \path[black edge] (\source) -- (\dest);
    \end{scope}

    \draw[black edge, ultra thick,
      decorate,
      decoration={zigzag, amplitude=1pt, segment length=2pt}]
      (C2) -- (C3);

    \begin{scope}[yshift=7.85cm]
    \node[vertex] (D0) {};
    \node[vertex] (D1) at ($(D0)+(4,0)$) {};
    \node[vertex] (D2) at ($(D0)+(2,1)$) {};
    \node[vertex] (D3) at ($(D0)+(2,-1)$) {};
    \foreach \source/\dest in {D0/D2,D0/D3,D1/D2,D1/D3,D2/D3}
        \path[black edge] (\source) -- (\dest);
    \end{scope}

    \draw[black edge, ultra thick,
      decorate,
      decoration={zigzag, amplitude=1pt, segment length=2pt}]
      (D2) -- (D3);

    \begin{scope}[xshift=-4cm,yshift=3.85cm, rotate=-45]
    \node[vertex] (F0) {};
    \node[vertex] (F1) at ($(F0)+(4,0)$) {};
    \node[vertex] (F2) at ($(F0)+(2,1)$) {};
    \node[vertex] (F3) at ($(F0)+(2,-1)$) {};
    \foreach \source/\dest in {F0/F2,F0/F3,F1/F2,F1/F3,F2/F3}
        \path[black edge] (\source) -- (\dest);
    \end{scope}

    \draw[black edge, ultra thick,
      decorate,
      decoration={zigzag, amplitude=1pt, segment length=2pt}]
      (F2) -- (F3);

    \begin{scope}[xshift=-4cm,yshift=5cm, rotate=45]
    \node[vertex] (E0) {};
    \node[vertex] (E1) at ($(E0)+(4,0)$) {};
    \node[vertex] (E2) at ($(E0)+(2,1)$) {};
    \node[vertex] (E3) at ($(E0)+(2,-1)$) {};
    \foreach \source/\dest in {E0/E2,E0/E3,E1/E2,E1/E3,E2/E3}
        \path[black edge] (\source) -- (\dest);
    \end{scope}

        \draw[black edge, ultra thick,
      decorate,
      decoration={zigzag, amplitude=1pt, segment length=2pt}]
      (E2) -- (E3);

   \draw[black edge, ultra thick,
      decorate,
      decoration={zigzag, amplitude=1pt, segment length=2pt}]
      (A0) -- (F1);
    \draw[black edge, ultra thick,
      decorate,
      decoration={zigzag, amplitude=1pt, segment length=2pt}] (D0) -- (E1);
    \draw[black edge, ultra thick,
      decorate,
      decoration={zigzag, amplitude=1pt, segment length=2pt}] (E0) -- (F0);
    \draw[black edge, ultra thick,
      decorate,
      decoration={zigzag, amplitude=1pt, segment length=2pt}] (C1) -- (D1);
    \draw[black edge, ultra thick,
      decorate,
      decoration={zigzag, amplitude=1pt, segment length=2pt}] (B1) -- (C0);
    \draw[black edge, ultra thick,
      decorate,
      decoration={zigzag, amplitude=1pt, segment length=2pt}] (A1) -- (B0);

\end{tikzpicture}
\caption{}
\label{fig:claw-free-a}
\end{subfigure}
\hfill
\begin{subfigure}{0.48\textwidth}
\centering
\begin{tikzpicture}[scale=0.5, transform shape]
    \begin{scope}[xshift=-4cm, yshift=1cm, rotate=60]
    \node[vertex] (A0) {};
    \node[vertex] (A1) at ($(A0)+(2,0)$) {};
    \node[vertex] (A2) at ($(A0)+(1,{1*sqrt(3)})$) {};
    \foreach \source/\dest in {A0/A1,A1/A2,A2/A0}
        \path[black edge] (\source) -- (\dest);
    \end{scope}

    \begin{scope}[xshift=-5cm, yshift=7cm, rotate=0]
    \node[vertex] (B0) {};
    \node[vertex] (B1) at ($(B0)+(2,0)$) {};
    \node[vertex] (B2) at ($(B0)+(1,{1*sqrt(3)})$) {};
    \foreach \source/\dest in {B0/B1,B1/B2,B2/B0}
        \path[black edge] (\source) -- (\dest);
    \end{scope}

    \begin{scope}[xshift=4cm, yshift=1cm, rotate=60]
    \node[vertex] (C0) {};
    \node[vertex] (C1) at ($(C0)+(2,0)$) {};
    \node[vertex] (C2) at ($(C0)+(1,{1*sqrt(3)})$) {};
    \foreach \source/\dest in {C0/C1,C1/C2,C2/C0}
        \path[black edge] (\source) -- (\dest);
    \end{scope}

    \begin{scope}[xshift=3cm, yshift=7cm, rotate=0]
    \node[vertex] (D0) {};
    \node[vertex] (D1) at ($(D0)+(2,0)$) {};
    \node[vertex] (D2) at ($(D0)+(1,{1*sqrt(3)})$) {};
    \foreach \source/\dest in {D0/D1,D1/D2,D2/D0}
        \path[black edge] (\source) -- (\dest);
    \end{scope}

\begin{scope}[xshift=-1.5cm, yshift={(7+sqrt(3))*1cm}]

  \node[vertex] (W0) {};
    \node[vertex] (W1) at ($(W0)+(2,0)$) {};
    \node[vertex] (W2) at ($(W0)+(1,0.5)$) {};
    \node[vertex] (W3) at ($(W0)+(1,-0.5)$) {};
    \foreach \source/\dest in {W0/W2,W0/W3,W1/W2,W1/W3,W2/W3}
        \path[black edge] (\source) -- (\dest);
\end{scope}
    
     \begin{scope}[xshift=-2.5cm, yshift=1cm]
    \node[vertex] (Y0) {};
    \node[vertex] (Y1) at ($(Y0)+(2,0)$) {};
    \node[vertex] (Y2) at ($(Y0)+(1,0.5)$) {};
    \node[vertex] (Y3) at ($(Y0)+(1,-0.5)$) {};
    \foreach \source/\dest in {Y0/Y2,Y0/Y3,Y1/Y2,Y1/Y3,Y2/Y3}
        \path[black edge] (\source) -- (\dest);
    \end{scope}

    \begin{scope}[xshift=0.5cm, yshift=1cm]
    \node[vertex] (Z0) {};
    \node[vertex] (Z1) at ($(Z0)+(2,0)$) {};
    \node[vertex] (Z2) at ($(Z0)+(1,0.5)$) {};
    \node[vertex] (Z3) at ($(Z0)+(1,-0.5)$) {};
    \foreach \source/\dest in {Z0/Z2,Z0/Z3,Z1/Z2,Z1/Z3,Z2/Z3}
        \path[black edge] (\source) -- (\dest);
    \end{scope}

    \draw[black edge, ultra thick,
      decorate,
      decoration={zigzag, amplitude=1pt, segment length=2pt}] (A1) -- (D0);
    \draw[black edge, ultra thick,
      decorate,
      decoration={zigzag, amplitude=1pt, segment length=2pt}] (B1) -- (C2);
    \draw[black edge, ultra thick,
      decorate,
      decoration={zigzag, amplitude=1pt, segment length=2pt}] (A2) -- (B0);
    \draw[black edge, ultra thick,
      decorate,
      decoration={zigzag, amplitude=1pt, segment length=2pt}] (D1) -- (C1);

     \draw[black edge, ultra thick,
      decorate,
      decoration={zigzag, amplitude=0.5pt, segment length=2pt}] (W2) -- (W3);
      
     \draw[black edge, ultra thick,
      decorate,
      decoration={zigzag, amplitude=0.5pt, segment length=2pt}] (Y2) -- (Y3);

      \draw[black edge, ultra thick,
      decorate,
      decoration={zigzag, amplitude=0.5pt, segment length=2pt}] (Z2) -- (Z3);

    \draw[black edge, ultra thick,
      decorate,
      decoration={zigzag, amplitude=0.5pt, segment length=2pt}] (A0) -- (Y0);
    \draw[black edge, ultra thick,
      decorate,
      decoration={zigzag, amplitude=0.5pt, segment length=2pt}] (Y1) -- (Z0);
   \draw[black edge, ultra thick,
      decorate,
      decoration={zigzag, amplitude=0.5pt, segment length=2pt}] (Z1) -- (C0);

      \draw[black edge, ultra thick,
      decorate,
      decoration={zigzag, amplitude=0.5pt, segment length=2pt}] (W1) -- (D2);

      \draw[black edge, ultra thick,
      decorate,
      decoration={zigzag, amplitude=0.5pt, segment length=2pt}] (W0) -- (B2);

\end{tikzpicture}

\caption{}
\label{fig:claw-free-b}
\end{subfigure}

\caption{(a) A ring of diamonds. (b) A claw-free cubic graph obtained from $H = K_4$. In both cases, darker edges indicate a separating perfect matching.}
\label{fig:claw-free}
\end{figure}


\clawfree*
\begin{proof}
If $G$ has a bridge, then the result follows from Lemma~\ref{lemma:cubic-bridge}. 
Otherwise, by Lemma~\ref{lemma:claw-free-characterization}, $G$ is either a ring of diamonds or is obtained from a cubic multigraph $H$.

If $G$ is a ring of diamonds, then in each diamond we select the unique edge joining its two vertices of degree $3$. 
For every pair of consecutive diamonds, we choose exactly one of the two edges joining their degree-$2$ vertices. 
It is straightforward to verify that these edges form a perfect matching of $G$. 
Moreover, removing them disconnects each diamond from its neighbours, and hence the matching is separating (see Figure~\ref{fig:claw-free-a}).

Otherwise, let $H'$ be the graph obtained from $H$ by replacing each vertex with a triangle. 
We claim that $E(H)$ is a separating matching in $H'$. Indeed, every edge of $H$ corresponds to an edge in $H'$ joining distinct triangles, so $E(H)$ is a matching. 
Moreover, removing $E(H)$ separates every triangle from the rest of the graph.

Finally, we argue by induction on the number of edges of $H$ that are replaced by strings of diamonds. 
If no edge is replaced, then $G = H'$ and the result follows from the previous paragraph. 
Otherwise, let $uv$ be an edge of $H$ that is replaced by a string of diamonds. 
By the inductive hypothesis, there exists a separating perfect matching $M$ in the graph before replacing $uv$, and in particular $uv \in M$. 
A string of diamonds admits a perfect matching that covers all its internal vertices and matches its two extremities exactly as $uv$ did. 
Replacing $uv$ in $M$ with this matching yields a perfect matching of $G$. 
Since the replacement only affects vertices inside the string, and the original matching was separating, the resulting matching is also separating (see Figure~\ref{fig:claw-free-b}).
\end{proof}



\bibliographystyle{plain}
\bibliography{bibliography}

@book{Bondy2008,
  author    = {J. A. Bondy and U. S. R. Murty},
  title     = {Graph Theory},
  series    = {Graduate Texts in Mathematics},
  volume    = {244},
  publisher = {Springer},
  year      = {2008}
}

@article {Chvatal1984,
    AUTHOR = {Chv\'atal, V.},
     TITLE = {Recognizing decomposable graphs},
   JOURNAL = {J. Graph Theory},
  FJOURNAL = {Journal of Graph Theory},
    VOLUME = {8},
      YEAR = {1984},
    NUMBER = {1},
     PAGES = {51--53},
      ISSN = {0364-9024,1097-0118},
   MRCLASS = {05C15},
  MRNUMBER = {732017},
MRREVIEWER = {Fan\ Chung\ Graham},
}

@book {Diestel18,
    AUTHOR = {Diestel, R.},
     TITLE = {Graph theory},
    SERIES = {Graduate Texts in Mathematics},
    VOLUME = {173},
   EDITION = {Fifth},
 PUBLISHER = {Springer, Berlin},
      YEAR = {2018},
     PAGES = {xviii+428},
      ISBN = {978-3-662-57560-4; 978-3-662-53621-6},
}

@article {Graham1970,
    AUTHOR = {Graham, R.},
     TITLE = {On primitive graphs and optimal vertex assignments},
   JOURNAL = {Ann. New York Acad. Sci.},
  FJOURNAL = {Annals of the New York Academy of Sciences},
    VOLUME = {175},
      YEAR = {1970},
     PAGES = {170--186},
      ISSN = {0077-8923,1749-6632},
   MRCLASS = {05.40},
  MRNUMBER = {269533},
MRREVIEWER = {L.\ H.\ Harper},
}

@article {Diwan02,
    AUTHOR = {Diwan, A.},
     TITLE = {Disconnected 2-factors in planar cubic bridgeless graphs},
   JOURNAL = {J. Combin. Theory Ser. B},
  FJOURNAL = {Journal of Combinatorial Theory. Series B},
    VOLUME = {84},
      YEAR = {2002},
    NUMBER = {2},
     PAGES = {249--259},
      ISSN = {0095-8956,1096-0902},
   MRCLASS = {05C70},
  MRNUMBER = {1889257},
MRREVIEWER = {Martin\ Ba\v ca},
}

@article {Funk2003,
    AUTHOR = {Funk, M. and Jackson, B. and Labbate, D. and Sheehan, J.},
     TITLE = {2-factor {H}amiltonian graphs},
   JOURNAL = {J. Combin. Theory Ser. B},
  FJOURNAL = {Journal of Combinatorial Theory. Series B},
    VOLUME = {87},
      YEAR = {2003},
    NUMBER = {1},
     PAGES = {138--144},
      ISSN = {0095-8956,1096-0902},
   MRCLASS = {05C45 (05C70)},
  MRNUMBER = {1967885},
MRREVIEWER = {Akira\ Saito},
}

@misc{le2023maximizingmatchingcuts,
      title={Maximizing Matching Cuts}, 
      author={Van Bang Le and Felicia Lucke and Daniël Paulusma and Bernard Ries},
      year={2023},
      eprint={2312.12960},
      archivePrefix={arXiv},
      primaryClass={math.CO},
      url={https://arxiv.org/abs/2312.12960}, 
}

@article {Gorsky2025,
    AUTHOR = {Gorsky, M. and Johanni, T. and Wiederrecht,
              S.},
     TITLE = {A note on the 2-factor {H}amiltonicity {C}onjecture},
   JOURNAL = {Discrete Math.},
  FJOURNAL = {Discrete Mathematics},
    VOLUME = {348},
      YEAR = {2025},
    NUMBER = {6},
     PAGES = {Paper No. 114442},
      ISSN = {0012-365X,1872-681X},
   MRCLASS = {05C70 (05C45)},
  MRNUMBER = {4867192},
}

@article {Le2019,
    AUTHOR = {Le, H. and Le, V. B.},
     TITLE = {A complexity dichotomy for matching cut in (bipartite) graphs
              of fixed diameter},
   JOURNAL = {Theoret. Comput. Sci.},
  FJOURNAL = {Theoretical Computer Science},
    VOLUME = {770},
      YEAR = {2019},
     PAGES = {69--78},
      ISSN = {0304-3975,1879-2294},
   MRCLASS = {68R10 (68Q17 68Q25)},
  MRNUMBER = {3944735},
}

@incollection {Le2023,
    AUTHOR = {Le, H. and Le, V. B.},
     TITLE = {Complexity results for matching cut problems in graphs without
              long induced paths},
 BOOKTITLE = {Graph-theoretic concepts in computer science},
    SERIES = {Lecture Notes in Comput. Sci.},
    VOLUME = {14093},
     PAGES = {417--431},
 PUBLISHER = {Springer, Cham},
      YEAR = {2023},
      ISBN = {978-3-031-43379-5; 978-3-031-43380-1},
   MRCLASS = {05C70 (05C40)},
  MRNUMBER = {4657731},
}

@article {LuckePaulusmaRies2023,
    AUTHOR = {Lucke, F. and Paulusma, D. and Ries, B.},
     TITLE = {Dichotomies for maximum matching cut: {$H$}-freeness, bounded
              diameter, bounded radius},
   JOURNAL = {Theoret. Comput. Sci.},
  FJOURNAL = {Theoretical Computer Science},
    VOLUME = {1017},
      YEAR = {2024},
     PAGES = {Paper No. 114795, 18},
      ISSN = {0304-3975,1879-2294},
   MRCLASS = {68R10},
  MRNUMBER = {4790920},
MRREVIEWER = {Ton\ Kloks},
}

@article {LuckePaulusmaRies2022,
    AUTHOR = {Lucke, F. and Paulusma, D. and Ries, B.},
     TITLE = {On the complexity of matching cut for graphs of bounded radius
              and {$H$}-free graphs},
   JOURNAL = {Theoret. Comput. Sci.},
  FJOURNAL = {Theoretical Computer Science},
    VOLUME = {936},
      YEAR = {2022},
     PAGES = {33--42},
      ISSN = {0304-3975,1879-2294},
   MRCLASS = {68R10},
  MRNUMBER = {4498350},
MRREVIEWER = {Yuxing\ Yang},
}

@article {Moshi89,
    AUTHOR = {Moshi, A.},
     TITLE = {Matching cutsets in graphs},
   JOURNAL = {J. Graph Theory},
  FJOURNAL = {Journal of Graph Theory},
    VOLUME = {13},
      YEAR = {1989},
    NUMBER = {5},
     PAGES = {527--536},
      ISSN = {0364-9024,1097-0118},
   MRCLASS = {05C70 (68R10)},
  MRNUMBER = {1016273},
MRREVIEWER = {Marek\ Kubale},
       DOI = {10.1002/jgt.3190130502},
       URL = {https://doi.org/10.1002/jgt.3190130502},
}

@article {Konig1916,
    AUTHOR = {K\"onig, D.},
     TITLE = {\"Uber {G}raphen und ihre {A}nwendung auf
              {D}eterminantentheorie und {M}engenlehre},
   JOURNAL = {Math. Ann.},
  FJOURNAL = {Mathematische Annalen},
    VOLUME = {77},
      YEAR = {1916},
    NUMBER = {4},
     PAGES = {453--465},
      ISSN = {0025-5831,1432-1807},
   MRCLASS = {99-04},
  MRNUMBER = {1511872},
}

@incollection {Patrignani2001,
    AUTHOR = {Patrignani, M. and Pizzonia, M.},
     TITLE = {The complexity of the matching-cut problem},
 BOOKTITLE = {Graph-theoretic concepts in computer science ({B}oltenhagen,
              2001)},
    SERIES = {Lecture Notes in Comput. Sci.},
    VOLUME = {2204},
     PAGES = {284--295},
 PUBLISHER = {Springer, Berlin},
      YEAR = {2001},
      ISBN = {3-540-42707-4},
   MRCLASS = {68R10 (05C85 68Q25)},
  MRNUMBER = {1905640},
}

@article {Petersen1900,
    AUTHOR = {Petersen, J.},
     TITLE = {Die {T}heorie der regul\"aren graphs},
   JOURNAL = {Acta Math.},
  FJOURNAL = {Acta Mathematica},
    VOLUME = {15},
      YEAR = {1891},
    NUMBER = {1},
     PAGES = {193--220},
      ISSN = {0001-5962,1871-2509},
   MRCLASS = {99-04},
  MRNUMBER = {1554815},
}

@article {Bouquet2025,
    AUTHOR = {Bouquet, V. and Picouleau, C.},
     TITLE = {The complexity of the perfect matching-cut problem},
   JOURNAL = {J. Graph Theory},
  FJOURNAL = {Journal of Graph Theory},
    VOLUME = {108},
      YEAR = {2025},
    NUMBER = {3},
     PAGES = {432--462},
      ISSN = {0364-9024,1097-0118},
   MRCLASS = {68R10 (05C70)},
  MRNUMBER = {4850657},
MRREVIEWER = {Dinabandhu\ Pradhan},
}

@article {Oum2009,
    AUTHOR = {Oum, S.},
     TITLE = {Perfect matchings in claw-free cubic graphs},
   JOURNAL = {Electron. J. Combin.},
  FJOURNAL = {Electronic Journal of Combinatorics},
    VOLUME = {18},
      YEAR = {2011},
    NUMBER = {1},
     PAGES = {Paper 62, 6},
      ISSN = {1077-8926},
   MRCLASS = {05C70},
  MRNUMBER = {2788679},
MRREVIEWER = {Zan-Bo\ Zhang},
}

@article{Sumner1974,
  author = {Sumner, D. P.},
  title = {Graphs with 1-factors},
  journal = {Proceedings of the American Mathematical Society},
  volume = {42},
  number = {1},
  year = {1974},
  pages = {8--12}
}

@article{LasVergnas1975,
  author  = {Las Vergnas, M.},
  title   = {A note on matchings in graphs},
  journal = {Cahiers du Centre d'Études de Recherche Opérationnelle},
  volume  = {17},
  number  = {2-3-4},
  pages   = {257--260},
  year    = {1975},
  note    = {Colloque sur la Th\'eorie des Graphes (Paris, 1974)}
}

\end{document}